\documentclass[oneside,english]{amsart}
\usepackage[T1]{fontenc}
\usepackage[latin9]{inputenc}
\usepackage{amsthm}
\usepackage{amssymb}
\usepackage[dvips]{graphicx}

%%%%%%%%%%%%%%%%%%%%%%%%%%%%%% Textclass specific LaTeX commands.
\numberwithin{equation}{section} %% Comment out for sequentially-numbered
\numberwithin{figure}{section} %% Comment out for sequentially-numbered
\theoremstyle{plain}
\newtheorem{thm}{Theorem}
\newtheorem{prop}[thm]{Proposition}
\newtheorem{lemma}[thm]{Lemma}

 \theoremstyle{definition}
 \newtheorem{defn}[thm]{Definition}

%%%%%%%%%%%%%%%%%%%%%%%%%%%%%% User specified LaTeX commands.
\newcommand{\R}{\mathbb{R}}

\newcommand{\e}{\varepsilon}
\newcommand{\por}{\text{por}}
\newcommand{\essinf}{\text{essinf}}
\newcommand{\esssup}{\text{esssup}}
\newcommand{\Q}{\mathcal{Q}}
\newcommand{\F}{\mathcal{F}}
\newcommand{\ldim}{\underline{\dim}}
\newcommand{\udim}{\overline{\dim}}
\newcommand{\supp}{\text{supp}}
\newcommand{\dist}{\text{dist}}
\newcommand{\EE}{\mathbb{E}}

\usepackage{babel}

\thanks{This work was partially supported by a Leverhulme Early Career Fellowship}

\subjclass[2010]{28A80 (Primary); 28D20 (Secondary)} 

\begin{document}

\begin{abstract}
Porosity and dimension are two useful, but different, concepts that quantify the size of fractal sets and measures. An active area of research concerns understanding the relationship between these two concepts. In this article we will survey the various notions of porosity of sets and measures that have been proposed, and how they relate to dimension. Along the way, we will introduce the idea of local entropy averages, which arose in a different context, and was then applied to obtain a bound for the dimension of mean porous measures.
\end{abstract}

\title{Porosity, dimension, and local entropies: a survey}

\author{Pablo Shmerkin}

\maketitle

\section{Introduction}

A large number of concepts have been introduced over the years to
quantify the ``size'' of sets of zero Lebesgue measure, in order to
be able to distinguish among them. Among the most useful ones are
a number of ``fractal dimensions'' of sets, such as Hausdorff dimension
and box-counting (or Minkowski) dimension. Generally speaking, these
dimensions arise as critical exponents when trying to cover or pack the set
in an optimal way. There are, however, other geometric quantities
that make the notion of ``zero measure'' quantitative. Recall from
the Lebesgue density theorem that if a set $E\subset\mathbb{R}^{n}$
has positive Lebesgue measure, then it contains no ``holes'', in the
sense that for almost every $x$, if $r$ is small then one cannot find
a large part of $B(x,r)$ which is disjoint from $E$. Thus, the presence
of ``holes'' of a certain relative size at all, or many, scales, is
a quantitative notion of singularity. Making this idea precise leads
to a variety of concepts of porosity of sets. Intuitively, it seems
that both these notions are related: if a set has many large holes,
then it should be easier to cover it by small sets. Likewise, if it is
possible to cover a set with few small balls, then it must have many
holes.

In this survey we will look at the connections between porosity and
dimension, but we will concentrate on measures rather than sets. As we will see, one
can define a variety of notions of porosity and dimension of measures,
which allow to distinguish among, and quantify the degree of singularity
of, measures on Euclidean space. This is a finer study than for the
case of sets, since any given set supports a large number of measures.
Indeed, for each result we will discuss for measures, a corresponding
result for sets can be obtained as a corollary.

We adopt the point of view that fractal dimensions are more naturally
defined for measures than for sets. For example, there are dual notions
of Hausdorff and packing dimensions, but for sets the usual definitions are different in crucial ways. In particular, the definition
of packing dimension requires an extra step. For measures, the duality
of both definitions becomes transparent.

In addition to their intrinsic geometric interest, notions of porosity have found applications in a variety of areas, including the theory of quasi-conformal mappings, singular integrals, and complex dynamics. In this article, our focus will be on the key geometric problems, but examples will be briefly presented to give a flavor of the breadth of applications. We strove to keep the exposition of the main geometric concepts and methods as elementary as possible, but the examples assume familiarity with certain areas.

Another recent, but shorter, survey on porosity is \cite{Jarvenpaa10}. There the focus is on the proof of the ``large porosity'' sharp bound, while here we discuss the proof of the ``small porosity'' bound (these concepts will be explained in Section \ref{sec:connections}).

This article is organized as follows. In Section \ref{sec:dimensionandentropies}, we define the dimensions of a measure that we will need in the rest of the article, introduce the concept of local entropy averages, and state and prove the main result giving the connection between entropy averages and dimension. In Section \ref{sec:porosity} we describe porosity and mean porosity of sets and measures, and give some illustrative examples and applications. In Section \ref{sec:connections}, we explore the connection between dimension and porosity. We state the most general result on the dimension of mean porous measures, briefly discuss its history, and then give a partial proof of the ``small porosity'' bound. Finally, in Section \ref{sec:generalizations} we present some further generalizations of porosity and explain its connection to conical densities and singular integrals.

\section{Dimension of measures and local entropy averages}
\label{sec:dimensionandentropies}

\subsection{Notation}

The following table summarizes the notation to be used throughout the article.

\medskip

\begin{tabular}{|c|c|}
    \hline
  Notation & Meaning\\
  \hline
  $\mathbb{N}$ & $\{1,2,\ldots\}$ \\
  $\mathbb{N}_0$ & $\{0,1,2,\ldots\}$\\
  $[n]$ & $\{0,1,\ldots, n-1\}$ \\
  $B(x,r)$ & Open ball of center $x$ and radius $r$ \\
  $B^d$ & Unit ball $B(0,1)$ in $\R^d$ \\
  $S^{d-1}$ & Unit sphere $\{x\in\R^d:|x|=1\}$ \\
  $G(d,k)$  & Grassmanian of $k$-dimensional subspaces in $\R^d$\\
 $\mathcal{M}_d$ & Borel measures on $\R^d$\\
 $\mathcal{P}_d$ & Borel probability measures on $\R^d$\\
 $\mathcal{P}^*_d$ & Borel probability measures on the unit cube $[0,1]^d$ \\
  \hline
\end{tabular}

\medskip

If $E$ is an event which may or may not hold (or, in other words, a random variable on some measure space which takes only values $0$ and $1$), then we will denote its indicator function by $\mathbf{1}(E)$.

For notational convenience, logarithms will always be to base $2$.

\subsection{Hausdorff and packing dimensions}

Let $\mu\in\mathcal{M}_d$. If $\mu(B(x,r))\sim r^\alpha$ for some $x$ and small $r$, it is reasonable to consider $\alpha$ as a kind of dimension at the point $x$. More precisely, the {\em local
dimension of $\mu$ at $x$} is defined as\[
\dim(\mu,x)=\lim_{r\to0}\frac{\log\mu(B(x,r))}{\log r},\]
provided the limit exists. In general, one can always speak of the
{\em  upper and lower local dimensions} at a point, by taking $\limsup$ and
$\liminf$ respectively. These will be denoted $\overline{\dim}(\mu,x)$
and $\underline{\dim}(\mu,x)$.

Lebesgue measure on $\R^{d}$ has local dimension $d$ at all points. More generally, if $\mu$ is Lebesgue
measure on a $k$-dimensional immersed submanifold of $\R^{d}$, then
the local dimension of $\mu$ at all points of its support is $k$.
For fractal measures, the value (and even the existence) of the local
dimension may vary from point to point, and may take non-integer values.

The (upper and lower) local dimensions reflect the decay of mass of
small balls. However, one is often interested in some global numerical
quantity, rather than a function that depends on the point. Since
we are dealing with measures, it is convenient to ignore sets of measure
zero. Thus, in order to globalize the notion of (upper/lower) local
dimension, it is natural to take the essential supremum or essential
infimum of the local dimensions. This leads to the first of our key
definitions: Hausdorff and packing dimension of a measure.

\begin{defn}
The upper (resp. lower) \emph{packing dimension} of a measure $\mu$,
denoted $\overline{\dim}_{P}\mu$ (resp. $\underline{\dim}_{P}\mu$)
is the essential supremum (resp. infimum) of the upper local dimensions.

The upper (resp. lower) \emph{Hausdorff dimension} of a measure $\mu$,
denoted $\overline{\dim}_{H}\mu$ (resp. $\underline{\dim}_{H}\mu$),
is the essential supremum (resp. infimum) of the lower local dimensions.
\end{defn}

We note the obvious inequalities
$\underline{\dim}_{H}\mu\le\overline{\dim}_{H}\mu$
, $\underline{\dim}_{P}\mu\le\overline{\dim}_{P}\mu$ ,
$\overline{\dim}_{H}\mu\le\overline{\dim}_{P}\mu$
and $\underline{\dim}_{H}\le\underline{\dim}_{P}\mu$ . All of the
inequalities may be strict. In general there is no comparison between
$\overline{\dim}_{H}\mu$ and $\underline{\dim}_{P}\mu$ .

For completeness, we recall the relationships between the dimensions of measures we have just defined, and the Hausdorff and packing dimensions of sets. For the latter, good introductions are \cite{Falconer03} and \cite{Mattila95}. The proof of the following can be found in \cite[Propositions 10.2 and 10.3]{Falconer97}:

\begin{prop} \label{prop:dim-of-measures-in-terms-of-sets}
Let $\mu\in\mathcal{P}_d$. Then:
\begin{align*}
\udim_P(\mu) &= \inf\{ \dim_P(E): \mu(E)=1 \},\\
\ldim_P(\mu) &= \inf\{ \dim_P(E): \mu(E)>0 \},\\
\udim_H(\mu) &= \inf\{ \dim_H(E): \mu(E)=1 \},\\
\ldim_H(\mu) &= \inf\{ \dim_H(E): \mu(E)>0 \}.
\end{align*}
\end{prop}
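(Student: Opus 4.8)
The plan is to separate the statement into two parts: a pointwise dictionary relating the local dimensions of $\mu$ to the Hausdorff or packing dimension of any set on which a bound for those local dimensions holds, and an elementary manipulation of essential suprema and infima. For the dictionary I would use (quoting from \cite{Falconer97}) the following ``Billingsley-type'' lemmas, valid for any finite Borel measure $\mu$ on $\R^d$ and any Borel set $A$: (i) if $\ldim(\mu,x)\le s$ for every $x\in A$ then $\dim_H A\le s$; and if $\mu(A)>0$ and $\ldim(\mu,x)\ge s$ for every $x\in A$ then $\dim_H A\ge s$; (ii) the same two implications with $\ldim(\mu,\cdot)$ replaced by $\udim(\mu,\cdot)$ and $\dim_H$ replaced by $\dim_P$. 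The first half of (i) is a Vitali ($5r$-covering) argument, its second half is the mass distribution principle, and (ii) consists of the packing-dimension analogues. Recall also that, by the definitions given above, $\ldim_H(\mu)$ and $\udim_H(\mu)$ are the $\mu$-essential infimum and supremum of $x\mapsto\ldim(\mu,x)$, while $\ldim_P(\mu)$ and $\udim_P(\mu)$ are the $\mu$-essential infimum and supremum of $x\mapsto\udim(\mu,x)$.

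With these in hand, each identity follows the same two-move pattern; I spell out $\ldim_H(\mu)=\inf\{\dim_H E:\mu(E)>0\}$ and indicate the changes for the others. Write $\ell=\ldim_H(\mu)$. For ``$\le$'': given $\e>0$, the definition of the essential infimum makes the Borel set $A_\e=\{x:\ldim(\mu,x)<\ell+\e\}$ have positive measure, and since $\ldim(\mu,x)\le\ell+\e$ throughout $A_\e$, the first half of (i) gives $\dim_H A_\e\le\ell+\e$; letting $\e\to0$ shows $\inf\{\dim_H E:\mu(E)>0\}\le\ell$. For ``$\ge$'': let $E$ be any Borel set with $\mu(E)>0$; the definition of the essential infimum also gives $\ldim(\mu,x)\ge\ell$ for $\mu$-almost every $x$, so $E'=E\cap\{x:\ldim(\mu,x)\ge\ell\}$ still has $\mu(E')>0$, and the second half of (i) yields $\dim_H E\ge\dim_H E'\ge\ell$. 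The identity $\ldim_P(\mu)=\inf\{\dim_P E:\mu(E)>0\}$ is obtained verbatim, replacing (i) by (ii) and recalling that $\ldim_P(\mu)$ is the essential infimum of $\udim(\mu,\cdot)$.

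For the two ``upper'' identities one works with the essential supremum. For $\udim_H(\mu)=\inf\{\dim_H E:\mu(E)=1\}$, put $u=\udim_H(\mu)$. The set $A=\{x:\ldim(\mu,x)\le u\}$ has full measure and satisfies the hypothesis of the first half of (i), so $\dim_H A\le u$, giving ``$\le$''. Conversely, if $\mu(E)=1$, then for each $\e>0$ the set $\{x:\ldim(\mu,x)>u-\e\}$ has positive measure, hence so does its intersection with $E$, and the second half of (i) forces $\dim_H E\ge u-\e$; letting $\e\to0$ gives ``$\ge$''. Likewise $\udim_P(\mu)=\inf\{\dim_P E:\mu(E)=1\}$ follows by the identical argument with (ii) in place of (i).

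Two points require care, and the second is where the real work lies. First, for the level sets $A_\e$, $A$, $E'$ above to be Borel (so that their $\mu$-measure and their Hausdorff and packing dimensions are meaningful), one needs $x\mapsto\ldim(\mu,x)$ and $x\mapsto\udim(\mu,x)$ to be Borel; this holds because $(x,r)\mapsto\mu(B(x,r))$ is lower semicontinuous and, $r\mapsto\mu(B(x,r))$ being monotone, the defining $\liminf$ and $\limsup$ as $r\to0$ may be computed along a fixed sequence of rational radii. Second, the packing versions in (ii) are the genuinely delicate input. The implication ``$\udim(\mu,x)\le s$ on $A$ implies $\dim_P A\le s$'' cannot be done with a single covering, since that hypothesis only guarantees $\mu(B(x,r))\ge r^{s+\e}$ for $r$ below an $x$-dependent threshold; one decomposes $A=\bigcup_k A_{k,\e}$ with $A_{k,\e}=\{x\in A:\mu(B(x,r))\ge r^{s+\e}\text{ for all }r<1/k\}$, bounds the packing pre-measure of each $A_{k,\e}$ by comparing a disjoint family of balls with $\mu(\R^d)$, and then uses the countable stability of packing dimension together with its characterization $\dim_P A=\inf\{\sup_i\udim_B A_i:A=\bigcup_i A_i\}$. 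The reverse implication is the hardest ingredient: via the Besicovitch covering theorem one shows that every positive-measure subset of $\{x:\udim(\mu,x)>t\}$ has infinite packing pre-measure, hence infinite $\mathcal{P}^t$-measure and so packing dimension at least $t$. Granting the lemmas (i) and (ii) in the form above from \cite{Falconer97}, the proof of the proposition reduces to the bookkeeping of the previous paragraphs.
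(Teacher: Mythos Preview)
Your argument is correct and is precisely the standard route; indeed, the paper does not supply its own proof of this proposition but simply refers to \cite[Propositions 10.2 and 10.3]{Falconer97}, and what you have written is essentially an unpacking of that reference. The Billingsley-type lemmas (i) and (ii) you quote are exactly the ingredients used there, and your bookkeeping with essential suprema/infima and the measurability check are sound, so there is nothing to add.
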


At first sight the above result may appear rather surprising, since the Hausdorff and packing dimension of sets are defined in terms of the global structure of the set, while for measures we have followed a local approach. Nevertheless, the proof of this proposition is not very difficult. The converse is a deeper fact: for any set $E$, it is possible to find measures supported on $E$ of Hausdorff and packing dimension arbitrarily close to those of $E$:

\begin{prop} \label{prop:dim-of-sets-in-terms-of-measures}
Let $E\subset\R^d$ be a Borel set. Then:
\begin{align*}
\dim_H(E) = \sup\{ \ldim_H(\mu): \mu\in\mathcal{P}_d, \mu(E)=1\},\\
\dim_P(E) = \sup\{ \ldim_P(\mu): \mu\in\mathcal{P}_d, \mu(E)=1\}.
\end{align*}
\end{prop}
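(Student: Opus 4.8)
\emph{Reductions.} In both identities, the inequality ``$\ge$'' is immediate from Proposition~\ref{prop:dim-of-measures-in-terms-of-sets}: if $\mu\in\mathcal P_d$ with $\mu(E)=1$ then $\mu(E)>0$, so $\ldim_H(\mu)\le\dim_H(E)$ and $\ldim_P(\mu)\le\dim_P(E)$, and one takes the supremum over such $\mu$. For the reverse inequalities it suffices to produce, for each $s<\dim_H(E)$ (resp. $s<\dim_P(E)$), a nonzero $\mu\in\mathcal M_d$ with $\supp\mu\subseteq E$ and $\underline{\dim}(\mu,x)\ge s$ (resp. $\overline{\dim}(\mu,x)\ge s$) for every $x\in\supp\mu$; after normalising, such a $\mu$ lies in $\mathcal P_d$, has $\mu(E)=1$, and its lower Hausdorff (resp. packing) dimension---the essential infimum of the relevant local dimensions---is at least $s$. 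Letting $s\uparrow\dim_H(E)$ (resp. $s\uparrow\dim_P(E)$) then finishes the proof.

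\emph{The Hausdorff case (Frostman's lemma).} Fix $s<\dim_H(E)$ and write $\mathcal H^s$, $\mathcal H^s_\infty$ for the $s$-dimensional Hausdorff measure and content. Then $\mathcal H^s_\infty(E)>0$; since $\mathcal H^s_\infty$ is a Choquet capacity and $E$ is Borel, there is a compact $K\subseteq E$ with $\mathcal H^s_\infty(K)=:c_0>0$, and we replace $E$ by $K$. Build finitely supported measures $\mu_n$ by the classical dyadic redistribution: give every dyadic cube $Q$ of side $2^{-n}$ meeting $K$ mass $\ell(Q)^s$, and then, moving upward through the scales, rescale the mass inside each larger dyadic cube $Q$ so that it never exceeds $\ell(Q)^s$. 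The resulting $\mu_n$ satisfies $\mu_n(Q)\le\ell(Q)^s$ for every dyadic cube $Q$; and the maximal ``saturated'' cubes (those on which the cap is attained) are disjoint and cover $K$, whence $\mu_n(\R^d)=\sum_i\ell(Q_i)^s\ge d^{-s/2}\sum_i(\diam Q_i)^s\ge d^{-s/2}c_0$. Any weak-$*$ limit $\mu$ of a subsequence of $(\mu_n)$ is supported on $K$, has mass $\ge d^{-s/2}c_0>0$, and inherits $\mu(Q)\le\ell(Q)^s$, hence $\mu(B(x,r))\le C_d\, r^s$ for all $x\in\R^d$ and $r>0$. Thus $\underline{\dim}(\mu,x)\ge s$ for all $x\in\supp\mu\subseteq K$, as needed.

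\emph{The packing case.} Recall the characterisation $\dim_P(A)=\inf\{\sup_i\udim_B(A_i):A=\bigcup_i A_i\}$, where $\udim_B$ denotes upper box dimension. Fix $s'<s<\dim_P(E)$. A transfinite ``peeling'' argument---at each stage discarding the relatively open balls on which the current set has upper box dimension $<s$, a countable operation by Lindelöf---together with this characterisation and the inner regularity of packing dimension for Borel sets, yields a nonempty compact $K\subseteq E$ such that $\udim_B(K\cap B(y,\rho))\ge s$ for every $y\in K$ and $\rho>0$. On $K$ we construct $\mu$ by a recursive Moran-type scheme: given a generation-$k$ ball $B(y,\rho)$ with $y\in K$, pick a scale $\delta_k$, as small as desired, at which $K\cap B(y,\rho)$ contains a $\delta_k$-separated set $\{z_1,\dots,z_N\}\subseteq K$ with $N\ge\delta_k^{-s'}$ (possible since $\udim_B(K\cap B(y,\rho))>s'$), declare the disjoint balls $B(z_i,\delta_k/4)$ to be the children of $B(y,\rho)$---shrinking slightly, if needed, to keep them inside the parent---and split the mass of $B(y,\rho)$ equally among them. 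Since every ball in the construction carries mass $\le1$, a generation-$k$ ball has mass $\le\delta_k^{s'}$; and since a generation-$k$ ball is disjoint even from the enlargements $B(z_j,\delta_k/2)$ of its siblings, one gets $\mu(B(x,\delta_k/4))\le\delta_k^{s'}$ for every $x\in\supp\mu\subseteq K$ and every $k$. Letting $k\to\infty$ gives $\overline{\dim}(\mu,x)\ge s'$ for all $x\in\supp\mu\subseteq E$; now let $s'\uparrow\dim_P(E)$.

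\emph{The main obstacle.} The measure constructions are soft---a mass distribution followed by a weak-$*$ (or Moran) limit---and the genuinely substantial inputs are the two \emph{inner regularity} facts used to pass to a compact set: that a Borel set of positive $\mathcal H^s$-content contains a compact subset of positive $\mathcal H^s$-content (Choquet capacitability of Hausdorff content), and that a Borel set of packing dimension $>s$ contains a nonempty compact subset all of whose relatively open portions have upper box dimension $\ge s$ (a Joyce--Preiss-type statement). Both fail for arbitrary non-Borel $E$, and it is here that the Borel hypothesis is used; this is the ``deeper fact'' alluded to before the statement.
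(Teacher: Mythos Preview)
The paper does not actually supply a proof of this proposition: it simply records that the Hausdorff identity is Frostman's Lemma (citing \cite[Theorem 9.8]{Mattila95}) and that the packing identity is due to Cutler \cite{Cutler95}. So there is no in-paper argument to compare against; your sketch is effectively filling in what the paper leaves to the literature, and it does so along the same lines as those references---the dyadic mass-redistribution proof of Frostman's Lemma for the Hausdorff half, and a Cutler-type strategy (extract a compact subset all of whose portions have large upper box dimension, then build a Moran measure on it) for the packing half.

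A couple of places in the packing sketch are compressed enough that a reader might balk. First, the sentence ``since every ball in the construction carries mass $\le 1$, a generation-$k$ ball has mass $\le\delta_k^{s'}$'' is not a proof: what you need is the easy induction $(\text{mass of child})\le(\text{mass of parent})\cdot\delta_k^{s'}\le\delta_1^{s'}\cdots\delta_k^{s'}\le\delta_k^{s'}$. Second, when bounding $\mu(B(x,\delta_k/4))$ you only argue disjointness from \emph{sibling} generation-$k$ balls; generation-$k$ balls lying in other generation-$(k-1)$ parents must also be excluded, which requires that you have chosen $\delta_k$ small relative to the separation at the previous level (you do allow ``as small as desired'', so this is available, but it should be said). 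Finally, the ``transfinite peeling'' step really has two ingredients that should be separated: inner regularity (a compact $K_0\subseteq E$ with $\dim_P(K_0)>s$) and then the peeling on $K_0$ to obtain the uniformly-box-dimension-$\ge s$ compact $K$; the one-step removal of bad balls does not suffice, since after removal the remaining set may acquire new bad balls, which is why the transfinite iteration is needed. None of these are fatal, but they are exactly the points where a careful write-up would expand.
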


For Hausdorff dimension, the above is the classical Frostman's Lemma; see \cite[Theorem 9.8]{Mattila95}. The packing dimension version is due to Cutler \cite{Cutler95}. We underline that in this article we will not make direct use of Hausdorff of packing dimensions of sets; Proposition \ref{prop:dim-of-sets-in-terms-of-measures} can be taken as their definition. An exception is Section \ref{sec:generalizations}, where some familiarity with Hausdorff measures is helpful. See e.g. \cite[Chapter 4]{Mattila95} for its definition and main properties.

\subsection{Dyadic filtrations and measures} \label{sec:dyadic-filtration}

A very extended trick in analysis is to transfer a problem to a dyadic setting,
and then profit from the tree structure of the family of dyadic cubes. One of the aims of this article is to show that this idea can be very powerful in dimension calculations, and in
particular in bounding the dimension of porous measures. Here we start by
setting up some notation and making some basic observations.

Fix an ambient dimension $d$. We denote the half-open unit cube $[0,1)^d$ by
$Q_0$. For $n\in\mathbb{N}_0$, we let $\Q_n$ be the collection of dyadic half-open cubes of step $n$, i.e.
\[
 \Q_n=\{ I_1\times \cdots\times I_d: I_i = [j_i 2^{-n},(j_i+1)2^{-n}) \text{
with } 0\le j_i<2^n \}.
\]
Further, we let $\Q^*=\bigcup_{n=1}^\infty \Q_k$ be the collection of dyadic
cubes of all levels. Each $\Q_n$ generates a finite $\sigma$-algebra which we
denote $\F_n$. The sequence $\{\F_n\}$ is increasing, and the limit
$\sigma$-algebra (i.e. the smallest $\sigma$-algebra containing all $\F_n$) is
the Borel $\sigma$-algebra of $Q_0$. Given $Q\in\Q_n$, we let $\Q_\ell(Q)$ be the
collection of cubes in $\Q_{n+\ell}$ which are contained in $Q$. In other words,
these are the level $\ell$ dyadic subcubes of $Q$.

Let $\mu\in\mathcal{P}_d^*$. To avoid technical problems,
we always make the standing assumption that $\mu$ assigns zero mass to the
boundaries of all cubes in $\Q^*$. This is usually not a restrictive assumption,
since it is satisfied by almost every translation of $\mu$, and many of the
problems one is interested in are translation-invariant. Another way to put it
is that the dyadic frame is usually just a tool, and we can translate the frame
at our convenience.

For any $Q\in\Q^*$ with $\mu(Q)>0$ and any $\ell\ge 1$, the measure $\mu$ induces a discrete
probability measure with $2^{d\ell}$ atoms, given by the relative measures of the
$\ell$-th level subcubes of $Q$. More precisely, define
\[
 \mu_\ell^Q(R) = \frac{\mu(R)}{\mu(Q)} \quad\text{for } R\in\Q_\ell(Q).
\]

We will write $\mu^Q=\mu_1^Q$. For $\ell=1$, this process can be reversed, and provides a flexible way to construct measures
satisfying certain desired properties. Namely, suppose that for each $Q\in \Q^*$, a
probability measure $\nu^Q$ on $\Q_1(Q)$ is given. One can then construct a
measure $\mu$ by starting with a unit mass on $Q_0$, and inductively splitting
the mass of each $Q\in Q_k$ according to the distribution $\nu^Q$. For example,
suppose $n=1$ and $\nu^Q$ is the uniform measure $(1/2,1/2)$; the
resulting measure is then Lebesgue measure on $[0,1]$. More generally, if
$\nu^Q$ is independent of $Q$ (but not necessarily uniform), the resulting
measure is {\em self-similar}, as the restriction to any dyadic cube is an
affine image of the original measure. Also note that, as long as each $\nu^Q$ is
globally supported (i.e. each cube of next level inherits a positive proportion
of the mass), the resulting measure $\mu$ will also be globally supported on
$Q_0$, although it can be highly singular.

In general, if we construct a measure this way, it may happen that
$\mu^Q\neq \nu^Q$ for some $Q$. Indeed, this will happen whenever the boundary
of some cube has positive measure. Under our standing assumption, the process of
forming the conditional measures $\{\mu^Q\}$ starting with a measure $\mu$, and
of building the measure $\mu$ out of the family of conditional measures
$\{\nu^Q\}$, are inverses of each other.

The definitions of local, Hausdorff and packing dimensions are in terms of Euclidean balls. It is natural to ask what happens if one instead uses dyadic cubes. Namely, let $\Q_n(x)$ denote the (unique) cube in $\Q_n$ containing $x$. Given a measure $\mu$ supported on $Q_0$, we define the upper ``dyadic local dimension'' as
\[
\overline{\dim}_2(\mu,x) = \limsup_{n\to\infty} \frac{-\log \mu(\Q_n(x))}{n}.
\]
(Recall that $\log$ is the logarithm to base $2$.) Likewise one defines the lower dyadic local dimension $\underline{\dim}_2(\mu,x)$. The question is then: how do these quantities relate to the usual (spherical) local dimensions? The potential issue is that $x$ may be very far from the center of $\Q_n(x)$, so one cannot simply nest dyadic cubes and balls centered at $x$ to conclude that $\dim_2(\mu,x)=\dim(\mu,x)$. And indeed it can happen that both are different at some points. The good news is that this can only happen on a small set:

\begin{prop}{\cite[Theorem B.1]{KRS11}} \label{prop:dim-from-dyadic-dim}
 Let $\mu\in\mathcal{P}^*_d$. Then
\[
 \overline{\dim}_2(\mu,x) = \overline{\dim}(\mu,x) \quad\text{for }\mu\text{-a.e.} x,
\]
and likewise for the lower local dimensions. In particular,
\[
 \overline{\dim}_P(\mu) = \mu\text{-}\esssup\,\, \udim_2(\mu,x),
\]
and likewise for lower packing dimension, and upper and lower Hausdorff dimensions.
\end{prop}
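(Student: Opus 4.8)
The plan is to compare, for $\mu$-a.e.\ $x$, the two sequences $\mu(\Q_n(x))$ and $\mu(B(x,r))$ by sandwiching. The easy inclusion is that $\Q_n(x)\subset B(x,\sqrt d\,2^{-n})$, since every point of $\Q_n(x)$ is within distance $\sqrt d\, 2^{-n}$ of $x$. Taking $r=\sqrt d\,2^{-n}$, we get $\mu(\Q_n(x))\le\mu(B(x,\sqrt d\,2^{-n}))$, and since $\log(\sqrt d\,2^{-n})=-n+O(1)$, this immediately yields $\udim(\mu,x)\le\udim_2(\mu,x)$ and $\ldim(\mu,x)\le\ldim_2(\mu,x)$ at \emph{every} $x\in Q_0$, with no exceptional set. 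So the whole content is the reverse inequality, i.e.\ showing that $\Q_n(x)$ is not much smaller than a comparable ball around $x$, for a.e.\ $x$.

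First I would quantify the obstruction: $B(x,r)$ can meet up to $2^d$ dyadic cubes of scale $2^{-n}$ once $2^{-n}\le r<2^{-n+1}$, and the worst case is when $x$ is near a corner of $\Q_n(x)$, so that $B(x,r)$ spills into neighbouring cubes carrying most of the mass. To control this, fix $\e>0$ and consider the ``bad'' set $E_n$ of points $x$ such that $x$ lies within distance $2^{-(1+\e)n}$ of the boundary of $\Q_n(x)$ — equivalently, within that distance of some coordinate hyperplane $\{x_i=j2^{-n}\}$. A direct volume/covering estimate shows $E_n$ can be covered by $O(2^{dn})$ cubes of scale roughly $2^{-(1+\e)n}$ in the offending direction times $2^{-n}$ in the others; more to the point, I want to bound $\mu(E_n)$. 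The clean way is to invoke the standing assumption that $\mu$ gives zero mass to all dyadic hyperplanes, hence to $\bigcap_n\bigcup_{m\ge n}E_m$ once we also sum — so I would instead aim to show $\sum_n \mu(E_n')<\infty$ for a suitable thickening $E_n'$ and apply Borel--Cantelli. The quantity that makes this work is that $\sum_n \mu(E_n)$ need not converge for the naive $E_n$, so the actual argument in \cite{KRS11} presumably uses a more clever set, e.g.\ the points that are boundary-close at scale $n$ \emph{for infinitely many $n$}, and shows this has $\mu$-measure zero directly from the hyperplane assumption together with a density/Lebesgue-type argument along the dyadic filtration.

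Granting that the set $N$ of points which are ``$2^{-(1+\e)n}$-close to $\partial\Q_n(x)$ for infinitely many $n$'' is $\mu$-null, the main estimate goes as follows. For $x\notin N$ and $n$ large, $B(x,2^{-(1+\e)n})$ is contained in the union of $\Q_n(x)$ with at most $2^d-1$ neighbouring cubes; but in fact, if $x$ is at distance $>2^{-(1+\e)n}$ from $\partial\Q_n(x)$, then $B(x,2^{-(1+\e)n})\subset\Q_n(x)$ outright — wait, that is exactly the point, so we actually get $\mu(B(x,2^{-(1+\e)n}))\le\mu(\Q_n(x))$. Choosing $r=2^{-(1+\e)n}$ and noting $\log r=-(1+\e)n$, this gives
\[
\frac{-\log\mu(\Q_n(x))}{n}\ge (1+\e)\,\frac{-\log\mu(B(x,r))}{(1+\e)n},
\]
and letting $n\to\infty$ along the appropriate subsequence yields $\udim_2(\mu,x)\ge(1+\e)^{-1}\udim(\mu,x)$... no: it yields $\udim_2(\mu,x)\le(1+\e)\udim(\mu,x)$ in the $\limsup$, since large $\mu(\Q_n(x))$ forces small left side. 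Combined with the trivial inequality and letting $\e\downarrow0$, we conclude $\udim_2(\mu,x)=\udim(\mu,x)$ for $\mu$-a.e.\ $x$, and identically for the $\liminf$ versions.

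Finally, the ``in particular'' statements follow by taking essential suprema/infima of the pointwise identities, exactly as $\udim_P$, $\ldim_P$, $\udim_H$, $\ldim_H$ are defined as $\mu$-essential sup/inf of the corresponding local dimensions. \textbf{The main obstacle} is the measure-theoretic heart: proving that the ``infinitely-often boundary-close'' set is $\mu$-null using only that $\mu$ charges no dyadic hyperplane. The subtlety is that boundary-closeness at scale $n$ is not a statement about the mass of a single small cube but about the geometric position of $x$, so one cannot apply Borel--Cantelli to $\sum\mu(E_n)$ naively; the correct argument likely conditions on $\F_n$ and uses a martingale/density argument (a.e.\ point is a $\mu$-density point of its own dyadic cube relative to a slightly larger region), which is precisely where one cashes in the zero-mass-on-hyperplanes hypothesis.
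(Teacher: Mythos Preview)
First, a factual point: the paper does not supply its own proof of this proposition. It is quoted verbatim as \cite[Theorem B.1]{KRS11} and then used as a black box, so there is no in-paper argument to compare against. That said, your proposal does contain a genuine gap that is worth naming.

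Your easy direction is fine: $\Q_n(x)\subset B(x,\sqrt d\,2^{-n})$ gives $\udim(\mu,x)\le\udim_2(\mu,x)$ and the $\liminf$ analogue at every point.

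The problem is the hard direction. You aim to show that for $\mu$-a.e.\ $x$ one eventually has $B(x,2^{-(1+\e)n})\subset\Q_n(x)$, i.e.\ that the set
\[
N=\Big\{x:\dist\big(x,\partial\Q_n(x)\big)<2^{-(1+\e)n}\text{ for infinitely many }n\Big\}
\]
is $\mu$-null. This is \emph{false} in general, even under the standing hypothesis that $\mu$ charges no dyadic hyperplane. In one dimension, membership in $N$ is a Diophantine/symbolic condition on the binary digits of $x$ (roughly: for infinitely many $n$, the next $\lceil\e n\rceil$ digits are all $0$ or all $1$), and one can manufacture measures for which this holds $\mu$-a.e. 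For instance, let $\mu$ be the law of $\sum b_n 2^{-n}$ with independent bits and $\mathbb P(b_n=0)=1-2^{-k}$ for $n$ in the block $[2^k,2^{k+1})$; then with probability bounded below each block is entirely $0$, so $\mu$-a.e.\ point lies in $N$, yet $\sum_n \mathbb P(b_n=1)=\infty$ forces $b_n=1$ infinitely often and hence $\mu$ gives no mass to dyadic rationals. So no martingale or density argument will rescue the set-containment claim: the claim itself is wrong, and you must compare \emph{masses}, not sets.

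The approach that actually works (and is, up to the metric-space generality, what \cite{KRS11} does) replaces containment by a neighbour estimate plus Borel--Cantelli on mass ratios. For $v\in\{-1,0,1\}^d\setminus\{0\}$ and $K>0$, set $A_{n,v}=\{x:\mu(\Q_n(x)+v2^{-n})>K\,\mu(\Q_n(x))\}$; summing over level-$n$ cubes gives $\mu(A_{n,v})\le K^{-1}$, since the translates $Q+v2^{-n}$ are pairwise disjoint. Taking $K=K_n=n^2$ and summing over $n$ and the $3^d-1$ directions, Borel--Cantelli yields that for $\mu$-a.e.\ $x$ and all large $n$, every neighbouring cube has mass at most $n^2\mu(\Q_n(x))$, hence
\[
\mu\big(B(x,2^{-n})\big)\le \mu\big(Q_n^*(x)\big)\le 3^d n^2\,\mu(\Q_n(x)),
\]
where $Q_n^*(x)$ is the union of $\Q_n(x)$ with its neighbours. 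Since $\log(3^d n^2)=o(n)$, this gives $\udim_2(\mu,x)\le\udim(\mu,x)$ (and the $\liminf$ version) for $\mu$-a.e.\ $x$, which combined with your easy direction finishes the proof. Note that no hyperplane hypothesis is even needed for this inequality; it is used only to guarantee $\Q_n(x)$ is well-defined and the conditional-measure formalism behaves.
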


\subsection{Local entropy averages and dimension}

There are a number of methods to estimate the dimension of fractal measures. In this section we describe an approach, using local entropies, that turns out to be useful to bound the dimension of porous measures, as well as in other geometric and dynamical problems.

We recall some basic definitions and properties about entropy. If $p=(p_1,\ldots,p_N)$ is a probability vector, its entropy is
\[
 H(p) = \sum_{i=1}^N -p_i\,\log(p_i).
\]
The entropy quantifies the ``amount of information'' or ``uniformness'' of the vector $p$. One has
\[
 0\le H(p) \le \log N
\]
for all probability vectors with $N$ coordinates. The minimum is attained exactly at vectors with some $p_j=1$, and the maximum is attained precisely at the uniform vector $(1/N,\ldots,1/N)$. If $\nu$ is a measure with finite support, we define the entropy $H(\nu)$ in the obvious way.

Now let $\mu\in\mathcal{P}^*_d$, and fix $\ell\in\mathbb{N}$. For a point $x\in Q_0$, we can consider the sequence of conditional measures $\mu_\ell^{\Q_n(x)}$ on dyadic cubes converging down to $x$. These are discrete measures with at most $2^{\ell d}$ atoms, so we can compute their entropy (which takes values between $0$ and $\ell d$). It is reasonable to expect that for measures with large dimension, $H(\mu^{\Q_n(x)})$ will be large often, while the opposite will happen for measures of small dimension. This turns out to be exactly the case. The following is a slight extension of \cite[Lemma 4.2]{HochmanShmerkin11}; this particular formulation is due to M. Hochman [Private Communication].
\begin{thm} \label{thm:local-entropies}
Let $\mu\in\mathcal{P}^*_d$ and $\ell\in\mathbb{N}$. For almost every $x$ we have
\[
 \lim_{n\to\infty} \frac1{n} \left(-\log\mu(\Q_n(x))-  \frac1\ell \sum_{i=1}^n H\left(\mu_\ell^{\Q_i(x)}\right) \right) = 0.
\]
In particular,
\begin{align*}
\udim(\mu,x) = \limsup_{n\to\infty} \frac1{\ell n} \sum_{i=1}^n H(\mu_\ell^{\Q_i(x)}) ,\\
\ldim(\mu,x) = \liminf_{n\to\infty} \frac1{\ell n} \sum_{i=1}^n H(\mu_\ell^{\Q_i(x)}),
\end{align*}
for $\mu$-a.e.
\end{thm}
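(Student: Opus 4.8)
The plan is to establish the displayed limit first for $\ell=1$ and then bootstrap to general $\ell$ using the chain rule for entropy; the ``in particular'' clause will follow by dividing by $n$, passing to $\limsup$ and $\liminf$, and invoking Proposition~\ref{prop:dim-from-dyadic-dim}. Throughout, $\mu$-a.e.\ $x$ satisfies $\mu(\Q_n(x))>0$ for all $n$, so every quantity below is well defined. Write $W_m(x):=H(\mu^{\Q_m(x)})$ and $X_k(x):=-\log\bigl(\mu(\Q_k(x))/\mu(\Q_{k-1}(x))\bigr)$, so the telescoping identity $-\log\mu(\Q_n(x))=\sum_{k=1}^n X_k(x)$ holds exactly (using $\Q_0(x)=Q_0$, $\mu(Q_0)=1$). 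Two elementary facts about a fixed cube $Q\in\Q_{k-1}$ are the backbone: the integral of $X_k$ over $Q$ equals $\mu(Q)H(\mu^Q)$, so $\EE_\mu[X_k\mid\F_{k-1}](x)=H(\mu^{\Q_{k-1}(x)})$; and since $\mu^{\Q_{k-1}(x)}$ has at most $2^d$ atoms, $\EE_\mu[\mathbf{1}(X_k>M)\mid\F_{k-1}]\le 2^d2^{-M}$ for every $M>0$.

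For $\ell=1$ the goal is $\tfrac1n\bigl(\sum_{k\le n}X_k-\sum_{k\le n}W_k\bigr)\to 0$ $\mu$-a.e. By the tail bound, $\mu\{X_k>2\log k\}\le 2^dk^{-2}$ is summable, so by Borel--Cantelli $X_k(x)\le 2\log k$ for all large $k$, $\mu$-a.e.; hence replacing $X_k$ by the truncation $X_k':=X_k\,\mathbf{1}(X_k\le 2\log k)$ alters $\tfrac1n\sum_{k\le n}X_k$ by $o(1)$. Now $D_k:=X_k'-\EE_\mu[X_k'\mid\F_{k-1}]$ is a martingale difference sequence with $|D_k|\le 2\log k$, so $\sum_k k^{-2}\EE_\mu[D_k^2]<\infty$ and the $L^2$ strong law for martingale differences (by $L^2$-boundedness the martingale $\sum_k D_k/k$ converges a.s., then Kronecker's lemma applies) gives $\tfrac1n\sum_{k\le n}D_k\to 0$ $\mu$-a.e. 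Finally, since $-t\log t$ is increasing near $0$ and vanishes there, $0\le H(\mu^{\Q_{k-1}(x)})-\EE_\mu[X_k'\mid\F_{k-1}]\le 2^{d+1}(\log k)k^{-2}$, whose Cesàro averages also tend to $0$. Adding up, and absorbing the harmless one-step index shift between $H(\mu^{\Q_{k-1}(x)})$ and $W_k$, proves the $\ell=1$ case.

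For general $\ell$ it now suffices to show $\tfrac1n\bigl(\tfrac1\ell\sum_{i=1}^n H(\mu_\ell^{\Q_i(x)})-\sum_{i=1}^n W_i\bigr)\to 0$ $\mu$-a.e. The chain rule for entropy, applied to the refinement of $Q\in\Q_i$ into its level-$\ell$ subcubes, reads $H(\mu_\ell^Q)=\sum_{j=0}^{\ell-1}\sum_{R\in\Q_j(Q)}\tfrac{\mu(R)}{\mu(Q)}H(\mu^R)$, which at a point $x$ becomes $H(\mu_\ell^{\Q_i(x)})=\sum_{j=0}^{\ell-1}\EE_\mu[W_{i+j}\mid\F_i](x)$. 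Summing over $i\le n$, switching the order of summation, and comparing with $\sum_{i\le n}W_i$, the difference collapses (the $j=0$ term contributing nothing) to $\tfrac1\ell\sum_{j=1}^{\ell-1}\sum_{i=1}^n\bigl(\EE_\mu[W_{i+j}\mid\F_i]-W_{i+j}\bigr)(x)$ plus an error bounded by a constant depending only on $d$ and $\ell$. For fixed $j\ge 1$ and each residue $r$ modulo $j$, the sequence $s\mapsto\EE_\mu[W_{r+(s+1)j}\mid\F_{r+sj}]-W_{r+(s+1)j}$ is a martingale difference sequence bounded by $d$ relative to the filtration $(\F_{r+sj})_s$, so its averages vanish $\mu$-a.e.\ by the same martingale strong law; summing over the finitely many $j$ and residues $r$ completes the proof of the limit. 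The ``in particular'' part is then immediate: dividing by $n$ and using $\udim_2(\mu,x)=\limsup_n\tfrac{-\log\mu(\Q_n(x))}{n}$ (the definition) gives $\limsup_n\tfrac1{\ell n}\sum_{i\le n}H(\mu_\ell^{\Q_i(x)})=\udim_2(\mu,x)=\udim(\mu,x)$ for $\mu$-a.e.\ $x$ by Proposition~\ref{prop:dim-from-dyadic-dim}, and likewise $\liminf$ equals $\ldim_2(\mu,x)=\ldim(\mu,x)$.

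The only genuinely delicate point is the unboundedness of the ``information'' variables $X_k$: their Cesàro averages must be controlled with no uniform bound on individual terms. The level-dependent truncation at $2\log k$ does this, and the reason it works is the elementary fact that $-t\log t\to 0$ as $t\to 0^+$, which simultaneously makes the truncated terms differ from $X_k$ only finitely often (Borel--Cantelli) and keeps the truncated conditional means within summable error of the entropies $H(\mu^{\Q_{k-1}(x)})$. Everything else---the telescoping, the conditional-expectation identity, the chain rule, and the martingale strong law---is routine.
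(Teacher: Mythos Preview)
Your proof is correct, but it takes a longer road than the paper's. The paper works directly with the step-$\ell$ information variable $I_n(x)=-\log\mu^{\Q_n(x)}(\Q_{n+\ell}(x))$: for each residue $j\in\{0,\dots,\ell-1\}$ one checks that $\EE[I_{n\ell+j}\mid\F_{n\ell+j}]=H(\mu_\ell^{\Q_{n\ell+j}(x)})$, and then observes that $\|I_n\|_{L^2}^2\le \sup_p\sum_i p_i(\log p_i)^2<\infty$, the supremum over probability vectors with $2^{d\ell}$ atoms. This single estimate makes $I_{n\ell+j}-H(\mu_\ell^{\Q_{n\ell+j}(x)})$ a \emph{uniformly $L^2$-bounded} martingale difference sequence, so Theorem~\ref{thm:LLNMD} applies directly with no truncation; telescoping and summing over $j$ finishes. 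Your truncation step is therefore unnecessary: the information variables are pointwise unbounded but already uniformly in $L^2$, which is all Theorem~\ref{thm:LLNMD} needs. Your chain-rule bootstrap from $\ell=1$ to general $\ell$ is a genuine alternative and works fine, but it costs you a second martingale argument (splitting into residues mod $j$ for each $j<\ell$) that the paper avoids by working at step $\ell$ from the outset. What your approach buys is that the same $\ell=1$ input immediately yields the theorem for all $\ell$ at once; what the paper's approach buys is a one-pass argument with no bootstrapping and no truncation.
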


Before giving the proof, we make some remarks on this statement. Although similar results have been obtained and applied in many contexts, Theorem \ref{thm:local-entropies} seems to be the cleanest
and most powerful manifestation of the underlying idea. At first sight, it might seem that one is replacing a simple quantity - decay of mass - with an average of a more involved expression. Nevertheless, the latter is sometimes much more convenient, for the following reasons:
\begin{enumerate}
 \item One often has some information on the geometric distribution of a measure in small balls, and would like to convert that into information on the actual measure of small balls. As we will see, this is precisely the case when one deals with porosity properties.
 \item The entropy averages in the theorem are very natural from the dynamical point of view. In fact, Theorem \ref{thm:local-entropies} can be seen as a version of the Shannon-McMillan-Breiman Theorem for general (non-invariant) measures.
\item Related to the above is the fact that averages are often convenient to deal with. For example, if one lacks any control on the measure at a small number of scales, this will have small or little effect on the average.
\item Likewise, entropy has many pleasant properties that are exploited throughout ergodic theory and can also be useful in applying Theorem \ref{thm:local-entropies}. One basic example is the concavity of the entropy function $x\log(1/x)$.
\end{enumerate}

The proof of Theorem \ref{thm:local-entropies} is a straightforward application of the Law of Large Numbers for Martingale Differences, which we now recall:

\begin{thm} \label{thm:LLNMD}
 Let $\mathcal{B}_n$ be an increasing sequence of $\sigma$-algebras on a space $X$, and let $\mathcal{B}$ be the smallest $\sigma$-algebra containing all $\mathcal{B}_n$. Let $\mu$ be a measure on $(X,\mathcal{B})$. Further, suppose that $\{f_n\}$ is a uniformly $L^2$-bounded sequence of martingale differences, i.e. $f_n$ is $\mathcal{B}_n$-measurable, $\|f\|_{L^2(\mu)}$ is uniformly bounded, and $\EE(f_{n+1}|\mathcal{B}_n)=0$ for all $n$. Then
\[
 \lim_{n\to\infty} \frac1n \sum_{i=0}^{n-1} f_i(x) = 0 \quad\text{for } \mu\text{-almost every } x.
\]
\end{thm}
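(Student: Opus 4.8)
The plan is to recognize the partial sums $M_n=\sum_{i=0}^n f_i$ as an $L^2$ martingale (with second moments growing at most linearly in $n$) and then run the classical subsequence-plus-Borel--Cantelli proof of the strong law of large numbers, with Kolmogorov's maximal inequality replaced by its martingale version, Doob's inequality. Throughout we treat $\mu$ as a probability measure, which is the only case used in the applications and to which the general finite-measure case reduces by normalization. The first ingredient is the martingale structure: $M_n$ is $\mathcal{B}_n$-measurable and $\EE(M_{n+1}\mid\mathcal{B}_n)=M_n+\EE(f_{n+1}\mid\mathcal{B}_n)=M_n$, so $\{M_n\}$ is a martingale, and it suffices to prove $M_n/n\to 0$ $\mu$-a.e. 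The second ingredient is orthogonality of the increments: for $i<j$, conditioning on $\mathcal{B}_{j-1}$ and using that $f_i$ is $\mathcal{B}_{j-1}$-measurable while $\EE(f_j\mid\mathcal{B}_{j-1})=0$ gives $\EE(f_if_j)=0$; hence, writing $C=\sup_n\|f_n\|_{L^2(\mu)}^2<\infty$, we obtain $\EE(M_n^2)=\sum_{i=0}^n\EE(f_i^2)\le (n+1)C$, and more generally $\EE\big((M_n-M_m)^2\big)=\sum_{i=m+1}^n\EE(f_i^2)\le (n-m)C$ for $m<n$.

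Next I would carry out the two-scale argument. Along the dyadic subsequence $n_k=2^k$, Chebyshev's inequality and the bound above give $\mu\big(|M_{2^k}|\ge\e 2^k\big)\le \EE(M_{2^k}^2)/(\e 2^k)^2\le 2C/(\e^2 2^k)$, which is summable in $k$; by the Borel--Cantelli lemma, and then intersecting over $\e=1/m$, we get $M_{2^k}/2^k\to 0$ $\mu$-a.e. To fill the gaps between consecutive dyadic times, note that for fixed $k$ the process $n\mapsto M_n-M_{2^k}$, $n\ge 2^k$, is a martingale starting at $0$, so $(M_n-M_{2^k})^2$ is a nonnegative submartingale and Doob's maximal inequality yields
\[
\mu\Big(\max_{2^k\le n\le 2^{k+1}}|M_n-M_{2^k}|\ge \e 2^k\Big)\le \frac{\EE\big((M_{2^{k+1}}-M_{2^k})^2\big)}{(\e 2^k)^2}\le \frac{2^k C}{\e^2 2^{2k}}=\frac{C}{\e^2 2^k},
\]
again summable. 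A second application of Borel--Cantelli gives $2^{-k}\max_{2^k\le n\le 2^{k+1}}|M_n-M_{2^k}|\to 0$ $\mu$-a.e. Finally, for $2^k\le n<2^{k+1}$ one has $|M_n|/n\le 2^{-k}\big(|M_{2^k}|+\max_{2^k\le m\le 2^{k+1}}|M_m-M_{2^k}|\big)$, and both terms tend to $0$ a.e., so $M_n/n\to 0$ $\mu$-a.e., which is the assertion (after the harmless shift of index between $M_n$ and $\sum_{i=0}^{n-1}f_i$).

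I expect the main obstacle to be the gap-filling step: one must identify the correct maximal inequality and check that the object it is applied to, $(M_n-M_{2^k})^2$, is indeed a submartingale in $n$ on $\{n\ge 2^k\}$, so that Doob's inequality (the martingale analogue of Kolmogorov's inequality) is available; once this is set up, the remaining estimates are the routine SLLN bookkeeping. A minor point to be careful about is that the hypothesis $\EE(f_{n+1}\mid\mathcal{B}_n)=0$ is imposed only for $n\ge 0$ and so does not assert $\EE(f_0)=0$; this causes no difficulty, since $f_0$ contributes a bounded quantity to $M_n$ and both the orthogonality identity and the second-moment bounds above involve only products $f_if_j$ with $i\ne j$ (one index at least $1$) and the squares $\EE(f_i^2)\le C$, all of which remain valid.
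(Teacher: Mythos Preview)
Your argument is correct: the orthogonality of martingale differences gives the linear growth of second moments, Chebyshev plus Borel--Cantelli handles the dyadic subsequence, and Doob's $L^2$ maximal inequality fills the gaps; the remarks about the probability-measure assumption and the index shift are accurate and harmless.

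However, there is nothing to compare against: the paper does not prove Theorem~\ref{thm:LLNMD} at all. It simply states the result and refers the reader to \cite[Theorem~3 in Chapter~VII.9]{Feller71} for a proof, then proceeds to use it as a black box in the proof of Theorem~\ref{thm:local-entropies}. So your write-up is in fact more than the paper provides---a complete, self-contained proof where the paper only gives a citation. (For what it's worth, your approach is essentially the classical one: Feller's argument is a variant of the same Kolmogorov-type SLLN proof, with the maximal inequality supplied by the martingale/submartingale structure rather than independence.)
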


For a proof, see e.g. \cite[Theorem 3 in Chapter VII.9]{Feller71}. This is indeed a generalization of the Law of Large Numbers: if $\{X_k\}$ is a sequence of $L^2$-bounded i.i.d. random variables with zero mean, then letting $\mathcal{B}_n$ be the $\sigma$-algebra generated by $X_1,\ldots, X_n$, one clearly has $\EE(X_{n+1}|\mathcal{B}_n)=\EE(X_{n+1})=0$ by independence. Generally speaking, sequences of martingale differences enjoy many of the properties of independent sequences. Note also that if $\{f_n\}$ is a sequence of martingale differences, then $S_n= f_1+\ldots+f_n$ is a martingale, and conversely if $\{S_n\}$ is a martingale, then $\{S_n-S_{n-1}\}$ is a martingale difference sequence; this explains the terminology. We observe also that if $\{ g_n\}$ is any sequence where all the $g_n$ are $\mathcal{B}_n$-measurable, then $\{ g_{n+1} - \EE(g_{n+1}|\mathcal{B}_n)\}$ is a sequence of martingale differences.

%The following proof was suggested by Yuval Peres; our original argument was more complicated.

\begin{proof}[Proof of Theorem \ref{thm:local-entropies}]
 Let
\[
I_n(x) = -\log(\mu^{\Q_n(x)}(\Q_{n+\ell}(x))).
\]
Recall that $\mathcal{F}_n$ is the $\sigma$-algebra generated by $\Q_n$. We fix some $0\le j<\ell$, and consider the sequence $\{I_{n\ell+j}\}$ relative to the filtration $\{ \mathcal{F}_{n\ell+j}\}$.

We have
\begin{align*}
 \EE(I_{(n+1)\ell+j}|\mathcal{F}_{n\ell+j}) &= -\sum_{R\in\Q_\ell(\Q_{n\ell+j}(x))}  \mu^{\Q_{n\ell+j}(x)}(R)\log(\mu^{\Q_{n\ell+j}(x)}(R))\\
 &= H(\mu_\ell^{\Q_{n\ell+j}(x)}).
\end{align*}
Note that
\[
 \|I_n\|_2 \le \sup \sum_{i=1}^{2^{\ell d}} \log(p_i)^2 p_i =: C_{\ell d}< \infty,
\]
where the supremum is taken over all probability vectors $(p_1,\ldots,p_{2^{\ell d}})$. Since conditional expectations do not increase $L^2$ norms, we see that
\[
f_n(x)= I_{\ell n+j}(x) - H\left(\mu^{\Q_{\ell n+j}(x)}_\ell\right)
\]
is a sequence of uniformly $L^2$-bounded martingale differences. By Theorem \ref{thm:LLNMD},
\[
 \lim_{n\to\infty} \frac1n \sum_{i=0}^{n-1} \left(I_{i\ell+j}(x) - H(\mu^{\Q_{i\ell+j}(x)}_\ell))\right) = 0.
\]
Since
\[
\sum_{i=0}^{n-1} I_{i\ell+j}(x) = -\log\left(\prod_{i=0}^{n-1} \mu^{\Q_{i\ell+j}(x)}(\Q_{(i+1)\ell+j}(x))\right) = -\log\left(\frac{\mu(\Q_{n \ell+j}(x))}{\mu(\Q_j(x))}\right),
\]
we obtain that
\[
\lim_{n\to\infty} \frac1n \left(-\log\mu(\Q_{n\ell+j}(x)) - \sum_{i=0}^{n-1} H(\mu_\ell^{\Q_{i\ell+j}(x)})  \right) = 0.
\]
Adding over $i$ from $0$ to $\ell-1$ and rescaling the summation range concludes the proof of the first part of the theorem. The latter statements are then immediate from Proposition \ref{prop:dim-from-dyadic-dim}.
\end{proof}

\section{Porosity}
\label{sec:porosity}

\subsection{Porosity of sets}

We now turn our attention to a variety of notions of porosity, and their basic
properties. We start with the simplest concept, which is the porosity of a set.

Let $E\subset\R^d$ be any set. Given a point $x\in E$ and a radius $r>0$, we
define
\begin{equation} \label{eq:porosity-reference-ball}
\por(E,x,r) = \sup\left\{  \alpha: B(y,\alpha r) \subset B(x,r)\setminus E
\text{ for some } y \right\}.
\end{equation}
We think of $B(x,r)$ as a reference ball, and $\por(E,x,r)$ as the relative size of the
largest hole of $E$  in this reference ball. The porosity of $E$
at $x$ is then defined as
\begin{equation} \label{eq:porosity-at-point}
\por(E,x) = \liminf_{r\to 0} \por(E,x,r).
\end{equation}
Thus, if $\por(E,x)>0$, then $E$ contains holes around $x$ at {\em all} sufficiently small scales.
Finally, the porosity of $E$ is the size of the largest hole one sees at all
points:
\[
\por(E) = \inf_{x\in E} \por(E,x).
\]
If $\por(E)>0$, we will simply say that $E$ is porous.

We make some remarks on this definition.
\begin{enumerate}
 \item Clearly, $0\le \por(E)\le 1/2$. Both extreme values can be attained:
$E=\R^d$ has porosity $0$, and $E=\{0\}$ has porosity $1/2$.
 \item A hyperplane in $\R^d$ has maximal porosity $1/2$. This illustrates
two trivial but important characteristics of porosity: first, rather large sets
may still have maximal porosity. Porosity itself does not allow us to
distinguish between a hyperplane and a point. In Section \ref{subsec:k-porosity} we will outline the finer
concept of $k$-porosity, which remedies this issue. Secondly, porosity (unlike
dimension) depends on the ambient space, it is not an intrinsic property of the
metric structure of a set.
 \item One could define an alternative notion by taking the $\limsup$ in
\eqref{eq:porosity-at-point}, i.e. requiring holes only at arbitrarily small
scales. Both notions of porosity are useful and well-studied. The definition we
have followed is sometimes called {\em lower porosity}, and the one with the $\limsup$, {\em upper porosity}. There are sets of large upper porosity that nevertheless have full dimension (it is easy to construct such examples via the dyadic subdivision process). Since in this article our main focus is on nontrivial relations between the ideas of porosity and dimension, we will exclusively focus on (lower) porosity without further comment.
\end{enumerate}

What are some examples of porous sets? Heuristically, any set of zero measure which satisfies some form of self-similarity (i.e. the set is made up of smaller pieces which are similar to the whole, perhaps in a statistical sense or after some distortion) is likely to be porous. Indeed, since the set has zero measure, it has macroscopic holes around some points. The self-similar structure then propagates those holes to all points and scales.

We review some basic facts on iterated function systems (IFS's). Given strictly contractive maps $f_1,\ldots, f_m$ on $\R^d$ (an IFS), there exists a unique nonempty compact set $E$, called the {\em attractor} or {\em  invariant set}, such that
\[
 E = \bigcup_{i=1}^m f_i(E).
\]
When the maps $f_i$ are similarities, $E$ is called a {\em self-similar set}. When they are conformal, $E$ is called {\em self-conformal}. It is possible to extend the notion of attractor to IFS's with a countable family of contractions, see e.g. \cite{MauldinUrbanski96} for an introduction.

Suppose that the pieces $f_i(E)$ are mutually disjoint (this is know as the {\em strong separation condition}). If the $f_i$ are similarities, or conformal under some mild additional assumptions, it is not hard to make the above argument precise to show that $E$ is indeed porous, with a bound on the porosity that depends on the derivatives of the $f_i$ and the distances between the $f_i(E)$. We remark that the nonoverlapping of the $f_i(E)$ is crucial here; there are self-similar sets of zero measure which are {\em not} porous.

The porosity of self-similar sets was investigated in \cite{JJM02}, where it was shown that porosity allows to distinguish between self-similar sets of the same dimension. Moving away from strict self-similarity, the situation may become more complicated. In \cite{Urbanski01}, the porosity of attractors of {\em conformal infinite} iterated function systems was thoroughly studied. Several conditions for porosity, and non-porosity, were given, and applied to a range of examples, including sets defined in terms of their continued fraction expansion.

Another class of porous sets are smooth curves in $\R^2$, and more generally a smooth $k$-surface in $\R^d$ ($1\le k<d$). While this is trivial, we remark that smoothness is crucial: there exist continuous curves of positive Lebesgue measure, which are therefore not porous. We will see in Section \ref{subsec:meanporosity} that a weaker condition, mean porosity, is satisfied by the boundaries of domains satisfying certain geometric assumptions which are much weaker than smoothness.

\subsection{Porosity of measures}

As remarked earlier, our main focus will be on measures rather than sets. The concept of porosity of measures was introduced by Eckmann, J\"{a}rvenp\"{a}\"{a} and J\"{a}rvenp\"{a}\"{a} in \cite{EJJ00}.

In order to adapt the definition of porosity to measures, the key issue is how to
define an appropriate notion of ``measure-theoretic hole''. A first attempt
might be to consider balls of zero measure inside a reference ball. However, it
is quite easy to see that, if we proceeded in this way, we would end up with the
porosity of the support of the measure. In order to get a notion of porosity
that is genuinely measure-theoretical, the trick is to introduce a new
parameter $\e>0$. Roughly speaking, an ``$\e$-hole'' inside a reference ball
$B(x,r)$ will be a smaller ball $B(y,\alpha r)$ with relative mass at most $\e$.
More precisely, if $\mu$ is a measure on $\R^n$, we let
\[
\por(\mu,x,r,\e) = \sup\left\{\alpha: \exists y, \,B(y,\alpha r)\subset B(x,r)
\text{ and } \frac{\mu(B(y,\alpha r))}{\mu(B(x,r))}\le \e \right\}.
\]
As before, the porosity at a point is then obtained by taking $\liminf$.
\[
 \por(\mu,x,\e) = \liminf_{r\to 0} \por(\mu,x,r,\e).
\]
To obtain the porosity of $\mu$, we need to make two natural modifications with
respect to porosity of sets. First, we take essential infimum rather than
infimum:
\[
\por(\mu,\e) = \mu\text{-}\essinf\, \por(\mu,x,\e).
\]
Finally, we need to let $\e\to 0$:
\[
 \por(\mu) = \lim_{\e\to 0} \por(\mu,\e).
\]

Before proceeding further, we make some observations.
\begin{enumerate}
\item It is important to take limits in this order; otherwise we would end up again with the porosity of the
support of $\mu$. Also note that the limit as $\e\to 0$ does exist as $\por(\mu,\e)$ is nondecreasing in $\e$.
\item Again, we have $0\le \por(\mu)\le 1/2$. The left inequality is
trivial, but the upper bound is now slightly more involved; see \cite{EJJ00} for the proof.
\item  If $\por(\mu)>0$ then $\mu$ is singular; this is an immediate consequence of the Lebesgue density theorem.
\item One trivially has $\por(\supp(\mu))\le \por(\mu))$, since any ball
disjoint from $\supp(\mu)$ in particular is an $\e$-hole for all $\e>0$. Strict
inequality is however possible. In fact, there are porous globally supported measures. For simplicity we give the construction in $\R$; the adaptation to higher dimensions is immediate. We define the measure $\mu$ on $[0,1]$ by specifying the collection of conditional measures $\nu^Q$ as in Section \ref{sec:dyadic-filtration}. For any $Q\in\Q_n$, we let $\nu^Q$ take the value $1/n$ on the left dyadic subinterval of $Q$, and $1-1/n$ on the right-dyadic subinterval. It is clear that $\mu$ is globally supported, and it is not hard to see directly from the definition that $\por(\mu)\ge 1/4$.
\end{enumerate}

\subsection{Mean porosity}
\label{subsec:meanporosity}

The first generalization of the idea of porosity involves requiring holes not at {\em all} scales, but only at a {\em positive proportion} of scales, resulting in the notion of {\em mean porosity} . This concept was introduced by P. Koskela and S. Rohde in \cite{KoskelaRohde97}, and turns out to be a very natural one. Indeed, examples of sets which are mean porous (but not necessarily porous) abound. Before giving examples, we state the formal definition (we note that this deviates slightly from the original definition given by Koskela and Rohde; the basic principle is the same but the quantitative emphasis is different).

\begin{defn}
Let $\alpha\in [0,1/2]$ (the size of the relative holes) and let $\rho\in [0,1]$ (the proportion of scales at which holes are seen).  We say that a set $E\subset\R^n$ is {\em $(\alpha,\rho)$-mean porous} if, for any $x\in E$,
\[
\liminf_{n\to\infty} \frac1n \#\{i\in [n]: \por(E,x,r)\ge\alpha \} \ge \rho.
\]
\end{defn}

If $E$ is mean $(\alpha,\rho)$-porous for some $\alpha>0,\rho>0$, we will simply say that $E$ is mean porous. Note that a mean porous set has Lebesgue measure zero thanks, once again, to the Lebesgue density theorem.

The reason why Koskela and Rohde were interested in mean porosity was to prove sharp dimension bounds for classes of sets arising in analysis. We briefly discuss some examples.

\begin{enumerate}
 \item Given a constant $c\in (0,1)$, a {\em $c$-John domain} is a domain $\Omega\subset\R^n$ containing a distinguished point $x_0$, such that the following holds: for any point $x$ in the domain, it is possible to find a curve $\gamma:[0,l]\to\Omega$, parametrized by arc length, joining $x_0$ with $x$, and having the property that
\[
 \dist(\gamma(t),\partial \Omega) \ge \frac{t}{c}  \quad\text{for all }t\in [0,l].
\]
 Thus, a $c$-John domain is a domain where points can be connected by arcs that do not come too close to the boundary. It is easy to see (using for example a Whitney decomposition) that the boundary of a $c$-John domain is mean porous.
\item Recall that a map $f:\Omega\subset\R^d\to\R^d$ is {\em $K$-quasiconformal} if for any $x\in \Omega$,
\[
 \limsup_{r\to 0}\frac{\sup\{|f(x)-f(y):|x-y|\le r\}}{\inf \{|f(x)-f(y)|:|x-y|\ge r\}} \le K.
\]
It is proved in \cite[Corollary 3.2]{KoskelaRohde97} that if $f:B^d\to\R^d$ is quasi-conformal and H\"{o}lder-continuous (with any exponent), then $f(S^{d-1})$ is mean porous. Quasiconformal maps may be far from smooth so this is a deep generalization of the fact that smooth surfaces are porous.
\item An important open problem in complex dynamics is to characterize the rational maps of the Riemann sphere which have Julia sets of full dimension. In \cite{PrzytyckiRohde98} and \cite{PrzytyckiUrbanski01} it was shown that certain important classes of Julia sets (including most satisfying the Collett-Eckmann condition) are mean porous, thereby giving a partial solution to this problem.
\end{enumerate}

Nieminen \cite{Nieminen06} generalized the notion of mean porosity by allowing the size of the holes to depend on the scale (and to possibly go to $0$ as the scale $n$ goes to infinity). He proved a fine version of the results of Koskela and Rohde using generalized Hausdorff measures, and applied these results to obtain bounds on the size of more general John domains, and images of the sphere under a wider class of quasiconformal maps.

The extension of the definition of mean porosity to measures is due to Beliaev and Smirnov \cite{BeliaevSmirnov02}. The modifications needed are the same we used to go from porosity of sets to porosity of measures.

Fix a measure $\mu\in\mathcal{M}_d$. We then say that $\mu$ is {\em weakly mean $(\alpha,\rho,\e)$-porous} if for $\mu$-almost every $x$,
\begin{equation} \label{eq:def-mean-porosity-measures}
 \liminf_{n\to\infty} \frac1n \#\{i\in [n]:\por(\mu,x,2^{-i},\e)>\alpha\} \ge \rho.
\end{equation}
We say $\mu$ is {\em mean $(\alpha,\rho)$-porous} if it is weakly mean $(\alpha,\rho,\e)$-porous for all $\e>0$.

In other words, a measure is mean $(\alpha,\rho)$-porous if, given $\e>0$, at a typical point there is a proportion at least $\rho$ of scales at which one find $\e$-holes of relative size $\alpha$. If $\mu$ is $(\alpha,\rho)$-mean porous for some $\alpha>0$ and $\rho>0$, we will simply say that $\mu$ is mean porous. The Lebesgue density theorem still implies that mean porous measures are singular.

Examples of mean porous measures which are not porous can be easily constructed using the dyadic subdivision process. For example, divide the mass uniformly among next-level cubes at scales $2^{2j}\le n<2^{2j+1}$, and pass all the mass to one of the cubes of next level at scales $2^{2j-1}\le n<2^{2j}$.

One can wonder whether porosity of measures can be expressed in terms of the porosities of sets of positive or full measure, as is the case for dimension (recall Proposition \ref{prop:dim-of-measures-in-terms-of-sets}). This turns out to be a subtle problem. In \cite[Proposition 3.1]{EJJ00} it is shown that if $\mu\in\mathcal{P}_d$ satisfies the doubling condition (i.e. $\mu(B(x,2r))\le C\mu(B(x,r))$ for some constant $C>0$, all $r>0$ and all $\in\supp\mu$), then
\begin{equation} \label{eq:porosity-of-doubling-measures}
 \por(\mu) = \sup\{ \por(E): \mu(E)>0 \}.
\end{equation}
The doubling condition is crucial for this, as shown in \cite[Example 4]{EJJ00}. Beliaev and Smirnov in \cite[Proposition 1]{BeliaevSmirnov02} claimed that a result analogous to \eqref{eq:porosity-of-doubling-measures} is valid for mean porosity without any doubling assumption. Namely, if $\mu\in\mathcal{P}_d$ is $(\alpha,\rho)$-mean porous, then for any $\delta>0$ there is a mean $(\alpha-\delta,\rho-\delta)$-porous set $E$ with $\mu(E)>1-\delta$. However, this turned out to be wrong: Beliaev et al. constructed in \cite[Theorem 4.1]{BJJKRSS09} a porous measure which gives zero mass to {\em all} mean porous sets.

Clearly, if the support of a measure $\mu$ is $(\alpha,\rho)$-mean porous, then so is $\mu$. Thus, any measure supported on the classes of mean porous sets described earlier is automatically mean porous. Natural examples of mean porous measures whose support is not porous are harder to come by, though they are easy to construct using dyadic subdivision. A reason for this is that the definition is quite strict, requiring weak mean porosity for all $\e>0$.  With an eye on the applications, weak porosity (for some $\alpha,\rho,\e>0$)  is perhaps the more appropriate concept. However, in order for weak mean porosity to have any content, $\e$ has to be smaller than the Lebesgue measure of a ball of radius $\alpha$ (otherwise, Lebesgue measure would be weakly mean porous). We informally say that $\mu$ is {\em weakly mean porous} if it is weakly mean $(\alpha,\rho,\e)$-porous for some $\e$ which is very small compared to $\alpha^d$. For geometric simplicity we will restrict ourselves to mean porous measures in the rest of the article. Estimates on the dimension of weakly mean porous measures are given in \cite{Shmerkin11}.

\section{Connections between porosity and dimension}
\label{sec:connections}

\subsection{Results and history}

We have come to main theme of this survey: the relationship between porosity and dimension. The general question we are aiming to answer is: if a measure is (mean) porous with given parameters, how large can its dimension be? Of course, the answer will in general depend on the specific notion of porosity and dimension that is being considered. It will emerge, however, that on the dimension side upper packing dimension is the natural dimension to deal with, since one cannot obtain any better general estimates for other dimensions.

Given $\alpha \in [0,1/2]$ and $\rho\in [0,1]$, let us define
\[
 G_d(\alpha,\rho) = \sup\{\udim_P(\mu):\mu\in\mathcal{P}_d \text{ is mean } (\alpha,\rho)\text{- porous} \}.
\]
Ideally, we would like to compute $G_d(\alpha,\rho)$ explicitly, but this seems to be out of reach. Rather, research has focused on the asymptotic behavior of $G_d(\alpha,\rho)$ in the two limiting cases: as $\alpha\to 0$ (``small porosity case'') and as $\alpha\to 1/2$ (``large porosity case''). We first state the results, and then summarize the history that led to them.

\begin{thm} \label{thm:main}
There is a constant $C_d$ (depending only on the ambient dimension $d$) such that
\begin{align}
G_d(\alpha,\rho) &\le d - C_d\, \rho\,\alpha^d, \label{eq:bound-small-porosity}\\
G_d(\alpha,\rho) &\le d - \rho + \frac{C_d}{|\log(1-2\alpha)|} \label{eq:bound-large-porosity}.
\end{align}
\end{thm}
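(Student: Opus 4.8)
The plan is to derive \eqref{eq:bound-small-porosity} from the local entropy averages of Theorem~\ref{thm:local-entropies}, and to obtain \eqref{eq:bound-large-porosity} by a separate, more classical covering argument; since the latter is the subject of \cite{Jarvenpaa10}, I will only indicate its mechanism and concentrate on the small porosity bound, which is where the entropy method is natural.

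\emph{Set-up.} Let $\mu\in\mathcal{P}_d$ be mean $(\alpha,\rho)$-porous. Translating the dyadic grid affects neither the porosity nor the dimension, so we may assume $\mu\in\mathcal{P}^*_d$ and that it obeys the standing boundary hypothesis. By Proposition~\ref{prop:dim-from-dyadic-dim}, $\udim_P(\mu)=\mu\text{-}\esssup\,\udim_2(\mu,x)$, and by Theorem~\ref{thm:local-entropies}, for \emph{every} $\ell\in\mathbb{N}$ and $\mu$-a.e.\ $x$,
\[
\udim_2(\mu,x)=\limsup_{n\to\infty}\frac1{\ell n}\sum_{i=1}^n H\!\left(\mu_\ell^{\Q_i(x)}\right).
\]
Since $\ell$ is ours to choose, it suffices to pick it well and show that $H(\mu_\ell^{\Q_i(x)})$ falls a definite amount below its maximal value $\ell d$ for a proportion $\ge\rho$ of scales $i$.

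\emph{Two ingredients.} The first is an elementary \emph{entropy deficit} estimate: there is $c_d>0$ so that if $p$ is a probability vector on the $N=2^{\ell d}$ level-$\ell$ dyadic subcubes of a cube, and some subcube carries $p$-mass at most $\e$, then $H(p)\le\log N-|\log(1-1/N)|+\eta(\e)$, where $\eta(\e)\to0$ as $\e\to 0$; this follows by decomposing the entropy along the two-set partition \{small-mass subcube\}\,/\,\{its complement\}, bounding each conditional entropy by the logarithm of the number of cells, and using the concavity of $t\mapsto t\log(1/t)$. The second ingredient is the geometric reduction from a ball-hole to a dyadic subcube: fix $\ell=\ell(\alpha,d)$ with $2^{-\ell}\le\alpha/(4\sqrt d)$, so that every ball of radius $\alpha r$ contains a dyadic cube of sidelength $2^{-\ell}r$; then a scale $i$ with $\por(\mu,x,2^{-i},\e)>\alpha$ furnishes a level-$(i+\ell)$ cube inside $B(x,2^{-i})$ of $\mu$-mass at most $\e\,\mu(B(x,2^{-i}))$. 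A routine manipulation---passing $B(x,2^{-i})$ to a bounded union of dyadic cubes at a comparable level and relabelling scales, which changes proportions by $o(1)$---lets us take this cube to be a child of $\Q_i(x)$ of relative mass $\le\e'$ with $\e'\to0$ as $\e\to0$, so that the deficit estimate applies to $\mu_\ell^{\Q_i(x)}$ with $|\log(1-1/N)|\ge c_d\alpha^d$, as $N\asymp_d\alpha^{-d}$.

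\emph{Conclusion, and the hard part.} Putting these together: for $\mu$-a.e.\ $x$ there is, after the reduction and a limit in $\e$, a set of good scales $G_n(x)\subset[n]$ with $\liminf_n \#G_n(x)/n\ge\rho$ on which $H(\mu_\ell^{\Q_i(x)})\le\ell d-\tfrac12 c_d\alpha^d$, while $H(\mu_\ell^{\Q_i(x)})\le\ell d$ always; averaging over $i$ and taking the $\limsup$ gives $\udim_2(\mu,x)\le d-\tfrac{c_d}{2\ell}\,\rho\,\alpha^d$, and since this holds $\mu$-a.e.\ we conclude $G_d(\alpha,\rho)\le d-\tfrac{c_d}{2\ell}\rho\alpha^d$. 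As $\ell\asymp_d\log(1/\alpha)$, this is a bound of the form \eqref{eq:bound-small-porosity} but weakened by a factor $\log(1/\alpha)$ in the denominator. I expect the removal of that logarithm---to reach the stated $d-C_d\rho\alpha^d$---to be the main obstacle: a hole of relative size $\alpha$ is only registered by the $\ell$-fold refinement once $\ell\gtrsim\log(1/\alpha)$, and then it depresses $H(\mu_\ell^{\Q_i(x)})$ by only $\asymp\alpha^d$ out of a maximum that itself grows like $\log(1/\alpha)$. To do better one must use the fact that a single hole constrains the conditional mass distribution across the whole block of $\asymp\log(1/\alpha)$ consecutive scales it occupies (in effect telescoping the conditional-entropy deficits at those scales), rather than charging it to one refinement; making this precise, together with the dyadic-versus-spherical bookkeeping near cube boundaries, is the delicate point. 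Finally, \eqref{eq:bound-large-porosity} is proved by a different route: when $\alpha$ is close to $1/2$, a hole of relative size $\alpha$ leaves only a $\bigl(1-(2\alpha)^d\bigr)$-fraction of the reference ball available, so at each of the $\ge\rho$ proportion of good scales almost all of the mass is forced into a shell of relative volume tending to $0$; iterating a covering by such shells over those scales and optimizing yields $d-\rho+C_d/|\log(1-2\alpha)|$. We refer to \cite{Jarvenpaa10} and the references therein for this case.
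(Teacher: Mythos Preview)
Your overall strategy coincides with the paper's: reduce to dyadic porosity, apply Theorem~\ref{thm:local-entropies} with $\ell\asymp_d\log(1/\alpha)$, and obtain the weaker bound $d-C_d\rho\alpha^d/|\log\alpha|$ (this is exactly Proposition~\ref{prop:weaker-bound}); the sharp \eqref{eq:bound-small-porosity} then requires an additional idea, and \eqref{eq:bound-large-porosity} is deferred to the literature. Two points deserve comment.

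\textbf{The spherical-to-dyadic reduction is not ``routine''.} You write that passing from a hole in $B(x,2^{-i})$ to a level-$\ell$ subcube of $\Q_i(x)$ with small relative mass is a routine manipulation. It is not: the obstruction is that $x$ may sit near $\partial\Q_i(x)$, so $B(x,2^{-i})$ need not be contained in $\Q_i(x)$, nor does the hole ball have to meet $\Q_i(x)$ at all. A single translation of the grid cannot cure this at every scale simultaneously. The paper's Lemma~\ref{lem:porosity-to-dyadic-porosity} handles this by randomising the translation: the relative positions $\mathrm{pos}(\widetilde{x},\Q_i(\widetilde{x}))$ become i.i.d.\ uniform, so by the Law of Large Numbers $\widetilde{x}$ is ``central'' in $\Q_i(\widetilde{x})$ at a proportion $\ge 2^{-d}$ of scales, and one then applies Fubini. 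This costs a dimensional constant in $\rho$ but is otherwise harmless; you should not suppress it.

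\textbf{Removing the logarithm.} Your diagnosis is correct: the loss comes because a hole of relative size $\alpha$ depresses $H(\mu_\ell^{\Q_i(x)})$ by only $\asymp\alpha^d$ out of a maximum $\ell d\asymp d\log(1/\alpha)$. Your proposed fix---telescoping the deficit across the $\asymp\log(1/\alpha)$ intermediate scales---points in the right direction but is not quite how the paper proceeds. Rather than sticking with fixed $\ell$-level refinements and trying to sum deficits, the paper replaces Theorem~\ref{thm:local-entropies} by a version in which each cube is partitioned into subcubes of \emph{varying} levels, chosen adaptively: a cube $Q$ containing an $\e$-hole $R\in\Q_\ell(Q)$ is split into $R$ together with, for each $1\le j\le\ell$, the level-$j$ subcubes of $Q$ not containing $R$ (see Figure~\ref{fig-tree}); cubes without holes are split into their $2^d$ children. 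This tailored partition has only $O_d(\ell)$ cells, so the entropy ceiling is $\log O_d(\ell)$ rather than $d\ell$, while the hole still forces a deficit $\asymp\alpha^d$; normalising by the \emph{weighted} depth of the partition then cancels the $\ell$ and yields \eqref{eq:bound-small-porosity}. The precise statement and the accompanying bookkeeping are in \cite[Section~2 and Theorem~3.1]{Shmerkin11}. Your telescoping heuristic is morally the same phenomenon, but implementing it within the fixed-$\ell$ framework of Theorem~\ref{thm:local-entropies} does not obviously work, because the deficits at intermediate scales are not independent and may overlap.
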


Up to the value of the constant $C_d$, the first bound is sharp as $\alpha\to 0$.  For the second bound, there are examples showing that
\[
G_d(\alpha,\rho) \ge d - \rho + \frac{C'_d\,\rho}{|\log(1-2\alpha)|}
\]
for some constant $C'_d>0$.

Historically, the analog problem for sets was studied first. Motivated by various geometrical problems, Sarvas \cite{Sarvas75}, Trocenko \cite{Trocenko81} and Martio and Vuorinen \cite{MartioVuorinen87} proved bounds for the dimension of porous sets, concluding in the bound
\[
\dim_P(E) \le d - c_d\, (\por(E))^d.
\]
See also \cite{JJKRRS10} for an extension to certain metric spaces. Mattila \cite{Mattila88} made the first contribution to the large porosity case, applying his results on conical densities to identify the limiting behavior as the porosity tends to its maximum value (See Section \ref{subsec:conical-densities}): he proved that
\[
\sup\{\dim_H(E): \por(E)\ge \alpha\} \to d-1 \quad\text{as } \alpha\to \frac12.
\]
Salli \cite{Salli91} then gave the asymptotically correct estimates for packing dimension in the large porosity case:
\[
\dim_P(E)\le d - 1 + \frac{C_d}{|\log(1-2\por(E))|}
\]
(Compare with \eqref{eq:bound-large-porosity}). Koskela and Rohde \cite{KoskelaRohde97} dealt with the upper bound on the packing dimension of mean porous sets (using a more general definition), in the small porosity case.

It turned out that the extension of these results to measures was far from a merely technical task. In the article \cite{EJJ00}, an upper bound corresponding to \eqref{eq:bound-large-porosity} was proved for porous measures satisfying the doubling condition. In \cite{JarvenpaaJarvenpaa02}, the same result is stated without the doubling assumption, but it was remarked later in \cite{JarvenpaaJarvenpaa05} that the proof in \cite{JarvenpaaJarvenpaa02} only works for Hausdorff dimension. The results in \cite{JarvenpaaJarvenpaa05}, on the other hand, imply the small porosity bound \eqref{eq:bound-small-porosity} for the Hausdorff dimension of mean porous measures.

Theorem \ref{thm:main} was stated by Beliaev and Smirnov in \cite{BeliaevSmirnov02}. Their proof relied on a reduction to the corresponding inequalities for the packing dimension of porous sets. Unfortunately, the argument that enables this reduction is incorrect, as discussed above: it is not possible to express the (mean) porosity of a general measure in terms of the (mean) porosities of sets of positive measure. However, Beliaev and Smirnov did provide the first proof for the correct upper bound for the packing dimension of mean porous sets in the large porosity case.

Finally, the proof of Theorem \ref{thm:main} was achieved in \cite{BJJKRSS09} (large porosity) and \cite{Shmerkin11} (small porosity). Independently of \cite{Shmerkin11}, general results for the dimension of porous (rather than mean porous) measures in doubling metric spaces were obtained by K\"{a}enm\"{a}ki, Rajala and Suomala \cite{KRS11} (they require a technical condition on the measure which is automatically satisfied in Euclidean spaces).

It follows from Proposition \ref{prop:dim-of-sets-in-terms-of-measures} that mean porous sets satisfy the same packing dimension bounds as mean porous measures. Thus, one can recover the earlier results of Koskela and Rohde \cite{KoskelaRohde97} and of Beliaev and Smirnov \cite{BeliaevSmirnov02} from Theorem \ref{thm:main} (although, to be precise, one cannot obtain the full strength of the estimates in \cite{KoskelaRohde97}, since they use a finer version of mean porosity).

\subsection{The small porosity case: a sketch of proof}

In this section we present the main ideas in the proof of the bound \eqref{eq:bound-small-porosity}. In fact, we are going to give a full proof of the following slightly weaker bound:
\begin{prop} \label{prop:weaker-bound}
 There is a constant $C_d>0$ such that if $\mu\in\mathcal{P}_d$ is mean $(\alpha,\rho)$-porous, then
\[
 \udim_p(\mu) \le d - \frac{C_d\,\rho\,\alpha^d}{|\log\alpha|}.
\]
\end{prop}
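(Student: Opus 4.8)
The plan is to use Theorem~\ref{thm:local-entropies} to convert the geometric porosity hypothesis into a bound on the local entropy averages. Fix $\e>0$ to be chosen at the end, small compared to $\alpha^d$, and fix an integer $\ell$ (again to be chosen, roughly $\ell \sim |\log\alpha|$) so that the dyadic scale $2^{-\ell}$ is comparable to $\alpha$. By Theorem~\ref{thm:local-entropies}, for $\mu$-a.e.\ $x$ we have
\[
\udim(\mu,x) = \limsup_{n\to\infty} \frac1{\ell n}\sum_{i=1}^n H\bigl(\mu_\ell^{\Q_i(x)}\bigr),
\]
so it suffices to show that the entropies $H(\mu_\ell^{\Q_i(x)})$ are bounded away from their maximal value $\ell d$ by a definite amount $\eta = \eta(\alpha,\rho,d)$ along a set of scales $i$ of lower density at least $\rho$ (or some fixed fraction of $\rho$). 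Indeed, if this holds, then the Cesàro average of $\frac1\ell H(\mu_\ell^{\Q_i(x)})$ is at most $d - \frac{\rho\eta}{\ell} + o(1)$, giving $\udim_P(\mu) \le d - \rho\eta/\ell$, and the target bound will follow after optimizing $\ell$ and $\e$.

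The heart of the matter is the following deterministic claim about entropy: \emph{if a probability measure $\nu$ on the $2^{\ell d}$ dyadic subcubes of a cube $Q$ has the property that some dyadic subcube $R$ of side $\approx \alpha$ (so $R$ is a union of roughly $(\alpha 2^\ell)^d$ of the smallest subcubes) carries mass at most $\e$, while $R$ sits inside a ball of radius comparable to the full cube, then $H(\nu) \le \ell d - \eta$ for an explicit $\eta \gtrsim \alpha^d$ (up to the $\e$-dependence)}. This is where the hypothesis enters: weak mean $(\alpha,\rho,\e)$-porosity at $x$ means that for a density-$\rho$ set of scales $i$ there is a ball $B(y,\alpha 2^{-i}) \subset B(x,2^{-i})$ with $\mu(B(y,\alpha 2^{-i})) \le \e\,\mu(B(x,2^{-i}))$. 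I would pass from the Euclidean ball $B(x,2^{-i})$ to a comparable dyadic cube $\Q_{i'}(x)$ with $i' = i + O(1)$ (losing only a bounded number of scales, harmless for densities), and from the hole-ball $B(y,\alpha 2^{-i})$ to a dyadic subcube of $\Q_{i'}(x)$ of side $\approx 2^{-i'-\ell}$ contained in it — this forces $\ell$ to be chosen so that $2^{-\ell}$ is a small fixed multiple of $\alpha$, e.g.\ $2^{-\ell} \le \alpha/C_d$. The mass ratio of that small subcube relative to $\Q_{i'}(x)$ is then $\lesssim \e/(\text{bounded})$, i.e.\ still $\le C_d'\e$.

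For the entropy estimate itself, the cleanest route is concavity: write $H(\nu) = H(\nu_R, 1-\nu_R) + \nu_R H(\nu|_R) + (1-\nu_R) H(\nu|_{R^c})$, where $\nu_R = \nu(R) \le C\e$, $R^c$ is the complement, and $\nu|_R$, $\nu|_{R^c}$ are the normalized restrictions. Bounding $H(\nu|_R) \le \log(\#R \text{ atoms})$ and $H(\nu|_{R^c}) \le \log(\#R^c\text{ atoms})$ and comparing with $\ell d = \log(2^{\ell d})$, the deficit is
\[
\ell d - H(\nu) \;\ge\; (1-C\e)\log\!\left(\frac{2^{\ell d}}{2^{\ell d} - k}\right) - H(C\e, 1-C\e),
\]
where $k = \#\{$ smallest subcubes inside $R\} \ge c_d (\alpha 2^\ell)^d \gtrsim c_d$ with the choice $2^\ell \asymp 1/\alpha$; more carefully one wants $k/2^{\ell d}$ to be a definite constant $\gtrsim \alpha^d \cdot 2^{\ell d}/2^{\ell d}$... concretely, taking $2^{\ell}$ a large fixed multiple of $1/\alpha$ makes $k/2^{\ell d} \asymp \alpha^d$, and then $\log\frac{2^{\ell d}}{2^{\ell d}-k} \asymp k/2^{\ell d} \asymp \alpha^d$. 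So for $\e$ small enough compared to $\alpha^d$ (say $C\e \le c\,\alpha^d$ with small absolute $c$, which is exactly the ``weakly mean porous'' regime and costs nothing since we take $\e\to 0$ in the definition of mean porous), the $H(C\e,1-C\e)$ term is dominated and we get $\ell d - H(\nu) \gtrsim \alpha^d$, i.e.\ $\eta \gtrsim_d \alpha^d$.

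Assembling: $\eta \asymp_d \alpha^d$ and $\ell \asymp_d |\log\alpha|$, so $\udim_P(\mu) \le d - \rho\eta/\ell \le d - C_d \rho\alpha^d/|\log\alpha|$, which is exactly Proposition~\ref{prop:weaker-bound}; the extra $|\log\alpha|$ loss relative to \eqref{eq:bound-small-porosity} is precisely the price of working at a single fixed dyadic resolution $\ell$ rather than summing the gains over a geometric range of scales. The main obstacle, and the only genuinely delicate point, is the passage from Euclidean balls to dyadic cubes: a hole-ball $B(y,\alpha 2^{-i})$ of a fixed shape need not contain \emph{any} dyadic subcube of $\Q_{i'}(x)$ of the right scale if $y$ is badly aligned with the dyadic grid — this is why $\ell$ must be taken with a definite safety margin (so $2^{-\ell} \ll \alpha$, guaranteeing a dyadic cube of side $2^{-i'-\ell}$ fits inside the hole regardless of alignment) and why one only extracts a gain of order $(\alpha 2^\ell)^d$ rather than the naive $\alpha^d 2^{\ell d}$; one must also check that the bounded scale shifts $i \mapsto i'$ and the bounded multiplicative losses in the mass ratios do not degrade the density $\rho$ or the smallness of $\e$ by more than constants, which is routine.
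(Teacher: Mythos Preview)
Your overall strategy --- bound local entropy averages via Theorem~\ref{thm:local-entropies}, using the fact that a dyadic $\e$-hole at relative level $\ell$ caps the entropy below $\log(2^{d\ell}-1)$ --- is exactly the paper's. But the step you label ``routine'' is where the argument actually breaks. The problem is not whether a dyadic subcube fits inside the hole ball (your safety margin on $\ell$ handles that); it is whether the hole ball lies inside $\Q_{i'}(x)$ at all, and whether $\mu(\Q_{i'}(x))$ is comparable to $\mu(B(x,2^{-i}))$. If $x$ sits near the boundary of $\Q_{i'}(x)$, then $B(x,2^{-i})$ may protrude far outside $\Q_{i'}(x)$, the hole ball $B(y,\alpha 2^{-i})$ may lie entirely in a neighbouring dyadic cube, and $\mu(\Q_{i'}(x))$ may be an arbitrarily small fraction of $\mu(B(x,2^{-i}))$ (so the relative mass of your candidate $R$ inside $\Q_{i'}(x)$ need not be small). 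No bounded shift $i\mapsto i'$ repairs this for a fixed dyadic grid, and a point $x$ can be badly positioned at \emph{every} scale.

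The paper's remedy (Lemma~\ref{lem:porosity-to-dyadic-porosity}) is to randomly translate: for $\wt\mu=\tfrac14\mu+t$ with $t$ uniform in $[0,1/2)^d$, the relative positions $\text{pos}(\wt x,\Q_n(\wt x))$ form an i.i.d.\ uniform sequence, so by the law of large numbers $\wt x$ is well-centred (at distance $\ge \tfrac14 2^{-i}$ from $\partial\Q_i(\wt x)$) at a proportion $\ge 2^{-d}$ of scales. At those good scales $B(\wt x,\tfrac14 2^{-i})\subset\Q_i(\wt x)$, the hole ball is genuinely captured inside the reference dyadic cube, and the mass-ratio comparison goes through with no loss. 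A Fubini argument then shows that for a.e.\ $t$ the translate is dyadic mean $(\ell,2^{-d}\rho)$-porous, after which your entropy step finishes the proof --- though note the paper does this more simply: the dyadic hole $R$ is a \emph{single} atom among $2^{d\ell}$, so one just uses that $H_\e:=\max\{H(p):\text{some }p_j\le\e\}\to\log(2^{d\ell}-1)$ as $\e\to 0$, with no need for the chain-rule decomposition or for tracking how many level-$\ell$ subcubes sit inside $R$.
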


After the proof of this proposition, we will indicate the modifications needed to get the sharp bound \eqref{eq:bound-small-porosity}. The proof of Proposition \ref{prop:weaker-bound} consists of two parts: first, we reduce the problem to one involving a dyadic version of porosity. This is then handled using the method of locally entropy averages.

\subsubsection{Dyadic Porosity}

Fix $\mu\in\mathcal{P}_d^*$. Given $x\in\Q_0, n\in\mathbb{N}$ and $\e>0$, we let
 \[
 \por_2(\mu,x,n,\e) = \min\{\ell:\exists R\in\Q_\ell(\Q_n(x)): \mu(R) \le \e \mu(Q_n(x)) \}.
 \]
We then say that $\mu$ is {\em dyadic weakly mean-$(\ell,\rho,\e)$ porous} if for $\mu$-almost every $x$,
\begin{equation} \label{eq:def-weakly-mean-dyadic-porosity}
 \liminf_{n\to\infty} \frac1n \#\{i\in [n]: \por_2(\mu,x,n,\e)\le \ell \} \ge \rho.
\end{equation}
Finally, $\mu$ is {\em dyadic mean $(\ell,\rho)$-porous} if it is dyadic weakly mean $(\ell,\rho,\e)$-porous for all positive $\e>0$. In other words, a measure is dyadic mean $(\ell,\rho)$-porous if, for every $\e>0$, at a typical point there is a proportion $\rho$ of dyadic scales for which some dyadic subcube of relative size $2^{-\ell}$ is an $\e$-hole. A similar notion of dyadic porosity was defined by Beliaev and Smirnov in \cite[\S 6.2]{BeliaevSmirnov02}. The concept of average homogeneity, which was defined and investigated in \cite{JarvenpaaJarvenpaa05}, can be seen as a generalization of dyadic mean porosity.

The next lemma is the key in reducing spherical porosity to dyadic porosity (at the cost of losing constant factors); it is essentially \cite[Lemma 4.3]{Shmerkin11}.

\begin{lemma} \label{lem:porosity-to-dyadic-porosity}
Let $\mu\in\mathcal{P}^*_d$ be a mean $(\alpha,\rho)$-porous measure. Then for almost every $t\in [0,1/2)^d$, the measure $\frac14 \mu+t$ is dyadic mean $(\ell,2^{-d}\rho)$-porous, where
\begin{equation} \label{eq:ell-of-alpha}
\ell = \lceil|\log(\alpha/{4\sqrt{d}})|\rceil.
\end{equation}
\end{lemma}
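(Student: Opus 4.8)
The plan is to show that whenever a genuine (spherical) $\e$-hole of relative size $\alpha$ is seen in a reference ball $B(x, 2^{-i})$, then — after passing to the rescaled and translated measure $\nu := \tfrac14\mu + t$ and for almost every translation $t$ — there is a dyadic $\e$-hole of relative size $2^{-\ell}$ at a nearby dyadic scale. Since spherical holes are seen at a proportion $\ge\rho$ of scales at $\mu$-a.e.\ point, and since dilating/translating the measure affects neither the property of being a hole nor (up to a bounded shift of scales) the scale at which it is seen, this will produce dyadic holes at a proportion $\ge 2^{-d}\rho$ of scales at $\nu$-a.e.\ point. The factor $2^{-d}$ and the need to restrict $t$ to $[0,1/2)^d$ come from the passage to the dyadic grid, which I explain below.

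First I would do the geometric bookkeeping. Fix $x$ and a scale $r = 2^{-i}$ at which $\por(\mu,x,r,\e) > \alpha$, so there is $y$ with $B(y,\alpha r)\subset B(x,r)$ and $\mu(B(y,\alpha r)) \le \e\,\mu(B(x,r))$. After applying the similarity $z\mapsto \tfrac14 z + t$, the ball $B(y,\alpha r)$ becomes a ball $B(y',\alpha r/4)$ inside a ball $B(x',r/4)$, with the same relative mass inequality for $\nu$ (this is where the contraction by $1/4$ and the translation by $t\in[0,1/2)^d$ are used: they guarantee everything stays inside $Q_0=[0,1)^d$, so $\nu\in\mathcal P^*_d$ and the dyadic machinery of Section~\ref{sec:dyadic-filtration} applies). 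Now I want to replace the small spherical hole by a dyadic cube: choose $\ell$ as in \eqref{eq:ell-of-alpha}, so that $2^{-\ell}\le \alpha/(4\sqrt d)$, equivalently the diameter $\sqrt d\,2^{-\ell}$ of a $\Q_\ell$-cube is at most $\alpha r/4$ when the reference dyadic cube has side $\sim r/4$. A dyadic cube $R\in\Q_\ell$ of the appropriate level that meets $B(y',\alpha r/8)$ is then contained in $B(y',\alpha r/4)$, hence $\mu(R)\le \e\,\nu(B(x',r/4))$. The one remaining point is to relate $\nu(B(x',r/4))$ to $\nu(Q_n(\cdot))$ for an appropriate dyadic cube $Q_n$ containing the relevant points: a dyadic cube of side $\sim r/4$ containing $x'$ contains $B(x', \text{const}\cdot r)$ only after possibly shrinking the ball, and conversely is contained in a fixed dilate of such a ball, so $\nu(R)\le \e\, \nu(Q_n(x'))$ holds at a dyadic scale $n$ which differs from $i$ by an absolute bounded constant (absorbed into the asymptotic density count). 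This gives $\por_2(\nu, x', n, \e)\le \ell$.

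The main obstacle — and the reason the constant $2^{-d}$ appears — is a covering/measurability issue: the map $x\mapsto x'$ depends on $t$, and more seriously the assertion is for $\nu$-a.e.\ point $x'$, whereas the hypothesis is for $\mu$-a.e.\ point $x$. One cannot simply push the full-measure set forward, because at a given scale the dyadic cube $Q_n(x')$ need not contain a full ball around $x'$ — the point $x'$ could sit near the boundary of its dyadic cube, exactly the phenomenon flagged before Proposition~\ref{prop:dim-from-dyadic-dim}. The standard fix is to integrate over $t$: for $\mu$-a.e.\ $x$ and a.e.\ $t$, the translated point $\tfrac14 x + t$ is well inside its dyadic cube at a proportion $\ge 2^{-d}$ of the scales $i$ at which a spherical hole was present (because the fractional part of $\tfrac14 x + t$ is equidistributed in $Q_0$ as $t$ varies, and "well inside" is a fixed-proportion event). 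A Fubini argument then extracts a single full-measure set of $t$ for which the desired density bound $\ge 2^{-d}\rho$ holds at $\nu$-a.e.\ point. Finally, letting $\e\to 0$ along a countable sequence and intersecting the full-measure sets of $t$ promotes weak dyadic mean porosity for each $\e$ to dyadic mean $(\ell, 2^{-d}\rho)$-porosity, completing the proof.
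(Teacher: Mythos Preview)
Your proposal is correct and follows essentially the same route as the paper: rescale-and-translate, observe that at scales where the translated point lies ``well inside'' its dyadic cube (an event of probability $\ge 2^{-d}$ under the random translation) a spherical $\e$-hole yields a dyadic $\e$-hole of relative level $\ell$, and finish with Fubini and a countable intersection over $\e\to 0$. The paper makes the $2^{-d}$ step precise by noting that the relative positions $\mathrm{pos}(\widetilde{x},\Q_n(\widetilde{x}))$ are i.i.d.\ uniform in $Q_0$ as $t$ varies and then invoking the law of large numbers; this is the rigorous form of your ``equidistribution / fixed-proportion event'' remark and is the only place where your sketch would benefit from more detail.
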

\begin{proof}
Choose $t$ at random  uniformly in $[0,1/2)^d$, and consider the random measure $\widetilde{\mu}=\frac14\mu+t$. Let $\e>0$ and fix a point $x\in\supp\mu$ such that
\begin{equation} \label{eq:porous-point}
 \liminf_{n\to\infty} \frac1n \#\{i\in [n]: \por_2(\mu,x,n,\e)\le \ell \} \ge \rho.
\end{equation}

Write $\widetilde{x}=\frac14 x+t$. The \textit{relative position} of a point $y$ inside a dyadic cube $R$, denoted as $\text{pos}(y,R)$, is defined to be $T(y)$, where $T$ is the natural homothety mapping $R$ onto $Q_0$. Note that $\widetilde{x}\bmod 1/2$ is a random variable whose distribution is Lebesgue measure on $[0,1/2)^d$, and therefore the relative positions $\text{pos}(\widetilde{x},\Q_n(\widetilde{x}))$, $n\ge 2$, form a sequence of i.i.d. uniformly distributed random variables.

Let $U$ be the set of points in $Q_0$ at distance at least $\frac14$ from the boundary $\partial Q_0$. Since $U$ contains a cube of side $\frac12$, its Lebesgue measure is at least $2^{-d}$. Call $i$ a good scale if
\[
\por_2(\widetilde{\mu},\widetilde{x},r,\e)\ge \alpha \quad\textrm{and}\quad \text{pos}(\widetilde{x},\Q_i(\widetilde{x}))\in U.
\]
Then \eqref{eq:porous-point} and the law of large numbers imply that almost surely,
\begin{equation} \label{eq:proportion-of-porous-central-scales}
\liminf_{n\rightarrow\infty} \frac{1}{n} |\{ i\in [n] : i \text{ is a good scale }  \}| \ge 2^{-d}\rho.
\end{equation}
Note that any ball of radius $\frac14 2^{-i} \alpha$ contains a dyadic cube of side length $2^{-(i+\ell)}$, where $\ell$ is as in \eqref{eq:ell-of-alpha}. Therefore, if $i$ is a good scale then $\Q_i(\widetilde{x})$ contains a dyadic cube $R\in \Q_\ell(\Q_i(\widetilde{x}))$ satisfying
\begin{align*}
\widetilde{\mu}(R) & \le \widetilde{\mu}(B(x,\alpha \frac14 2^{-i})) \\
&\le \e\widetilde{\mu}(B(x,\frac14 2^{-i}))  \le \e\widetilde{\mu}(\Q_i(\widetilde{x})),
\end{align*}
or, in other words, $\por_2(\widetilde{\mu},\widetilde{x},n,\e)\le \ell$. In light of \eqref{eq:proportion-of-porous-central-scales}, almost surely
\[
\liminf_{n\rightarrow\infty} \frac{1}{n} \#\{i\in [n]:\por_2(\widetilde{\mu},\widetilde{x},n,\e)\le \ell\}  \ge 2^{-d}\rho.
\]
The foregoing analysis is for a fixed $x$ satisfying \eqref{eq:porous-point}. Now, since $\mu$ is mean $(\alpha,\rho)$-porous, $\mu$-a.e. point satisfies \eqref{eq:porous-point} for any $\e>0$ (taking a sequence $\e_j\to 0$), and therefore we can apply Fubini to conclude that for almost every $t$, the measure $\frac14\mu+t$ is dyadic mean $(\ell, 2^{-d}\rho)$-porous, as desired.
\end{proof}

\subsubsection{Proof of Proposition \ref{prop:weaker-bound}}

Completing the proof of Proposition \ref{prop:weaker-bound} is now only a matter of applying the local entropy averages method (Theorem \ref{thm:local-entropies}).

\begin{proof}[Proof of Proposition \ref{prop:weaker-bound}]
Since porosity is a local concept, we can assume without loss of generality that $\mu\in\mathcal{P}^*_d$. By Lemma \ref{lem:porosity-to-dyadic-porosity}, it is enough to show that if $\mu$ is dyadic mean $(\ell,\rho)$-porous, then
\begin{equation} \label{eq:dyadic-weaker-bound}
\udim_P(\mu) \le d - \frac{C'_d\, 2^{-\ell d}\,\rho}{\ell},
\end{equation}
for some constant $C'_d>0$ (note from \eqref{eq:ell-of-alpha} that $\alpha$ and $2^{-\ell}$ are comparable up to multiplicative constants which only depend on $d$).

Suppose then that $\mu$ is dyadic mean $(\ell,\rho)$-porous, and fix $\e>0$ arbitrarily small. Let $A_\e$ be the set of $x$ such that
\[
\liminf_{n\to\infty} \frac1n \#\{i\in [n]:\por_2(\widetilde{\mu},\widetilde{x},n,\e)\le \ell \} \ge \rho.
\]
Then $\mu(A_\e)=1$ by definition of mean porosity.

Let $H_\e$ the maximum possible entropy over all probability vectors $(p_1,\ldots, p_{2^{d\ell}})$, where some $p_i\le \e$. Although it is not hard to calculate $H_\e$ explicitly, we only require the easy fact that
\begin{equation} \label{eq:limit-entropy}
\lim_{\e\to 0} H_\e=H_0 = \log(2^{d\ell}-1).
\end{equation}
Now if $x\in A_\e$, then it follows from our assumption that
\[
\liminf\frac1n \sum_{i=1}^n \mathbf{1}\left(H(\mu_\ell^{\Q_i(x)}) \le H_\e\right) \ge \rho.
\]
It follows that if $x\in A_\e$ then
\[
\limsup_{n\to\infty} \frac{1}{\ell n} \sum_{i=1}^n H(\mu_\ell^{\Q_i(x)}) \le \frac{1}{\ell}(\rho H_\e +(1-\rho)d\ell).
\]
Let $A=\bigcap_{\e>0} A_\e = \bigcap_{n=1}^\infty A_{1/n}$ (since the $A_\e$ are nested). Then $\mu(A)=1$ and, by \eqref{eq:limit-entropy}, if $x\in A$ then
\begin{align*}
\limsup_{n\to\infty} \frac{1}{\ell n} \sum_{i=1}^n H(\mu_\ell^{\Q_i(x)}) &\le \frac{1}{\ell} \left(\rho \log(2^{d\ell}-1) +(1-\rho)d\ell\right) \\
&=  d + \frac{\log(1-2^{-d\ell})\rho}{\ell} \\
&\le d -  \frac{2^{-d\ell}\rho}{\ell},
\end{align*}
using that $|\log(1-x)|\ge x$ for $0< x<1$. Since this holds for $\mu$-almost every $x$, we conclude from Theorem \ref{thm:local-entropies} that \eqref{eq:dyadic-weaker-bound} holds (with $C'_d=1$), completing the proof.
\end{proof}

\subsubsection{The sharp upper bound}

In order to get rid of the logarithmic factor in Proposition \ref{prop:weaker-bound}, one needs to use a more general version of local entropy averages (Theorem \ref{thm:local-entropies}), in which cubes are subdivided into cubes of many different sizes, in a way that is allowed to depend on the cube:
\begin{itemize}
\item If a cube $Q$ contains an $\e$-hole $R\in\Q_\ell(Q)$, then we split $Q$ into a family of cubes consisting of $R$ and , for each $1\le j\le \ell$, all dyadic subcubes of $Q$ of level $j$ which do not contain $R$. See Figure \ref{fig-tree} (taken from \cite{Shmerkin11}) for an example when $d=2$ and $\ell=3$.
\item Cubes which do not contain $\e$-holes of (relative) level $\ell$ are split into the subcubes of first level, i.e. $\Q_1(Q)$.
\end{itemize}

 \begin{figure}
    \centering
    \includegraphics[width=0.5\textwidth]{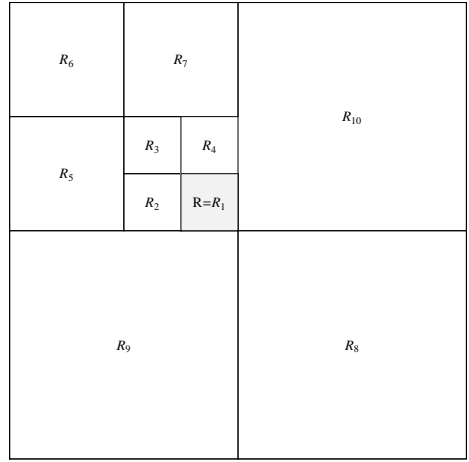}
    \caption{The correct way to subdivide a square with an $\e$-hole $R$.}\label{fig-tree}
  \end{figure}

This turns out to be slightly more efficient, because we are tailoring the decomposition in order to maximize the chance of witnessing holes. The corresponding local entropy averages result, in which cubes are subdivided into a family of cubes of possibly varying sizes (which may depend on the cube) needs to take into account not only the entropy of the measure, but also the sizes of the cubes. See \cite[Section 2 and Theorem 3.1]{Shmerkin11} for the setting and the precise statement. Using this result and the subdivision procedure just outlined, the proof of the bound \eqref{eq:bound-small-porosity} runs along similar lines to the proof of Proposition \ref{prop:weaker-bound}, with some minor new technical complications.

\section{Generalizations and further connections}
\label{sec:generalizations}

Many connections and generalizations to the porosity ideas presented so far have been proposed, sometimes motivated by specific applications, and sometimes by geometric reasons. We briefly discuss some of them.

\subsection{Conical densities}
\label{subsec:conical-densities}

A well-studied problem in geometric measure theory consists in understanding how a measure $\mu\in\mathcal{M}_d$ is distributed inside small cones (more precisely, the intersection of a cone based at a point $x$ with a small ball centered at $x$). In the classical case, the measure $\mu$ is the restriction of $s$-dimensional Hausdorff measure $\mathcal{H}^s$ to a set $E$ with $\mathcal{H}^s(E)<\infty$, but more recently packing measures as well as general measures were also considered.  It turns out that under suitable conditions, the answer is ``fairly well distributed'', meaning that all cones receive a positive proportion of the mass, which is uniform over all cones of a given amplitude. We illustrate this with a very general recent result of K\"{a}enm\"{a}ki et al \cite{KRS11}.

Recall that $G(d,k)$ denotes the Grassmanian of all $k$-dimensional subspaces of $\R^d$. Given a small $0<\alpha\le 1$ (the opening of the cone), $V\in G(d,d-k)$ (the subspace giving the ``direction'' of the cone) and $r>0$ (the radius of the reference ball), we define the cone
\[
X(x,r,V,\gamma) = \{y\in B(x,r): \dist(y-x,V) <\gamma|y-x| \}.
\]
Given $0<\gamma<1$, $\theta\in S^{d-1}$  we define the ``almost half-space'' (which is also an unbounded cone)
\[
H(x,\theta,\gamma)  = \{ y\in\R^d: (y-x)\cdot\theta >\gamma|y-x| \}.
\]

The theorem says that if $\ldim_P(\mu)>s$, then for any fixed $\gamma$, all ``non-symmetric cones'' $X(x,r,V,\gamma)\setminus H(x,\theta,\gamma)$ capture a uniformly positive proportion of the mass:

\begin{thm}{\cite[Theorem 5.1]{KRS11}} \label{thm:conical-density}
Let $\mu\in\mathcal{M}_d$ have lower packing dimension strictly larger than $k$ for some $k\in \{1,\ldots,d-1\}$. Then for any $0<\gamma\le 1$ there exists a constant $c$ (also depending on $d,k$ and $\ldim_P(\mu)$) such that
\[
\limsup_{r\to 0}\inf_{\theta\in S^{d-1}, V\in G(d,d-k)} \frac{\mu(X(x,r,V,\gamma)\setminus H(x,\theta,\gamma)}{\mu(B(x,r))} > c
\]
for $\mu$-a.e. $x$.
\end{thm}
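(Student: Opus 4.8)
The plan is to argue by contradiction: from a failure of the estimate I would extract a porosity‑type property of $\mu$ and then invoke the machinery of Section \ref{sec:connections} to force the dimension down. Write $s=\ldim_P(\mu)>k$ and let $D(x)$ denote the quantity $\limsup_{r\to0}\inf_{\theta,V}\mu(X(x,r,V,\gamma)\setminus H(x,\theta,\gamma))/\mu(B(x,r))$, so that the theorem asserts $\essinf_\mu D>0$. If this fails, then $\mu(A_c)>0$ for every $c>0$, where $A_c=\{x:D(x)<c\}$ (were $\mu(A_{c_0})=0$ for some $c_0>0$, the constant $c_0/2$ would work). Thus, for $x\in A_c$ and all sufficiently small $r$ there exist $\theta=\theta(x,r)$ and $V=V(x,r)$ with $\mu(X(x,r,V,\gamma)\setminus H(x,\theta,\gamma))<c\,\mu(B(x,r))$.

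The geometric heart of the matter is that a non‑symmetric cone contains holes in $d-k$ mutually orthogonal directions, not just one hole. Fixing an orthonormal basis $e_1,\dots,e_{d-k}$ of $V$ and signs $\sigma_j\in\{\pm1\}$ with $\sigma_je_j\cdot\theta\le0$, an elementary computation (in the spirit of Lemma \ref{lem:porosity-to-dyadic-porosity}) shows that for some $\delta_\gamma>0$ depending only on $\gamma$ and $d$, each ball $B(x+(r/2)\sigma_je_j,\delta_\gamma r)$ is contained in $X(x,r,V,\gamma)\setminus H(x,\theta,\gamma)$; hence each has relative mass $<c$, and the $d-k$ of them are centred in orthogonal directions from $x$. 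So at $\mu$-a.e.\ point of $A_c$, at every small scale, $\mu$ exhibits $d-k$ orthogonal $c$-holes of relative radius $\delta_\gamma$. Following Section \ref{sec:connections}, I would then transfer these holes to the dyadic grid by a random translation (losing only constants, with $\ell\asymp|\log\gamma|$ dyadic levels per hole), restrict $\mu$ to $A_c$ (a density‑point argument shows the restriction inherits the directional porosity), and feed the result into the local entropy averages method in the refined, variable‑subdivision form of \cite{Shmerkin11} — subdividing each cube so as to witness all $d-k$ orthogonal holes simultaneously, and letting $c\to0$ so that the relevant maximal entropy drops to its limit. The target is $\udim_P(\mu|_{A_c})\le k$: since $\mu(A_c)>0$, this would force, on a set of positive $\mu$-measure, the upper local dimension to be both $\ge s$ (as $\ldim_P(\mu)=s$) and $\le k$, a contradiction.

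The real obstacle is precisely this last step, and it is concentrated in the small‑$\gamma$ regime. Using only that a non‑symmetric cone contains a single ball of radius $\sim\gamma r$, Theorem \ref{thm:main} yields no more than $\udim_P(\mu|_{A_c})\le d-C_d\gamma^d$, which exceeds $k$ when $\gamma$ is small; and even keeping track of all $d-k$ orthogonal holes, removing an $O(1)$-fraction of each cube at each scale contributes only codimension $O(1/\ell)=O(1/|\log\gamma|)$, never reaching codimension $d-k$. To get down to $k$ one must use that the hole is a thin cone of full length $r$ — equivalently, the joint geometry of the $d-k$ missing directions across all scales at once — so that the codimension from those directions accumulates to exactly $d-k$, uniformly in $\gamma$. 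Making this bookkeeping work is the substantive content of the theorem of K\"aenm\"aki, Rajala and Suomala \cite{KRS11}; reducing to (mean) porosity of the support, or removing one hole per scale, is not enough. An alternative route to the same dimension bound goes through the ``average homogeneity'' framework of \cite{JarvenpaaJarvenpaa05}.
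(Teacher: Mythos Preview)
The paper does not prove this theorem at all: it is quoted verbatim from \cite[Theorem~5.1]{KRS11} as an illustration of conical density results, with no argument supplied. There is therefore nothing in the paper to compare your proposal against.

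As for the proposal itself, it is not a proof but an honest sketch that correctly locates its own gap. Your reduction --- extracting $d-k$ orthogonal $c$-holes of relative radius $\delta_\gamma$ from the failure of the conical estimate, restricting to $A_c$, and applying local entropy averages --- is sound as far as it goes, but as you yourself note, the porosity machinery of Section~\ref{sec:connections} (even in the refined form of \cite{Shmerkin11}) only yields a codimension of order $1/|\log\gamma|$ per scale, not the full $d-k$ needed to reach $\udim_P(\mu|_{A_c})\le k$. The missing ingredient is precisely what you identify in your final paragraph: one must exploit the full conical geometry (a thin cone of length $r$, not merely a ball of radius $\delta_\gamma r$) to accumulate codimension $d-k$ uniformly in $\gamma$. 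You then defer this step to \cite{KRS11}, which is the very result you are trying to prove. So the proposal is circular at the crucial point; it explains why the theorem is plausible and where the difficulty lies, but does not supply the argument that overcomes it.
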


For other recent progress on conical densities see \cite{KaenmakiSuomala08, CKRS10, Kaenmaki10, KaenmakiSuomala11}

In spirit, results such as this are not too far from dimension bounds for porous measures. Indeed, Theorem \ref{thm:conical-density} says that if $\ldim_P(\mu)>k$ then $\mu$ cannot have ``conical holes'' (or rather, ``non-symmetric $k$-dimensional conical holes''). It is in fact possible to use results on conical densities to prove upper bounds for the dimension of porous sets and measures in the large porosity situation. The idea is as follows. Suppose that $\mu\in\mathcal{P}_d$ satisfies
\[
B(z,\alpha r) \subset B(x,r) \quad\text{and}\quad \mu(B(z,r))<\e \mu(B(x,r))
\]
for some $x,z\in\R^d$, small $r>0, \e>0$ and $\alpha\in (0,1/2)$. Then, letting $V\subset\R^d$ the line with direction $z-x$, a geometric argument shows that
\[
X(x,r,V,\gamma) \subset B(x,2(1-2\alpha)r) \cup B(z,\alpha r),
\]
where the opening $\gamma$ depends on $\alpha$. If $\alpha$ is close to $1/2$, this says that the measure of the cone $X(x,r,V,\gamma)$ is just slightly larger than that of a ball $B(x,r')$ with $r'$ much smaller than $r$. On the other hand, the measure of the cone $X(x,r,V,\gamma)$ is at least a positive proportion of the measure of $B(x,r)$ under suitable assumptions (such as those of Theorem \ref{thm:conical-density}). Combining these facts carefully leads to a dimension bound for $\mu$.

This method was introduced by Mattila \cite{Mattila88}, who gave the first results on the dimension of porous sets. It was also used by K\"{a}enm\"{a}ki and Suomala in \cite{KaenmakiSuomala08} and \cite{KaenmakiSuomala11} to bound the dimension of $k$-porous sets and measures (to be defined below). However, in this way one gets poor quantitative results, and in order to obtain sharp bounds a direct approach to porosity is more effective.

\subsection{$k$-porosity}
\label{subsec:k-porosity}

We observed earlier that porosity does not allow to distinguish between points and sets as large as a hyperplane. Since one of the goals of the definition of porosity is to give a means to distinguish between the size sets of zero measure, it is desirable to have a concept of porosity type that is able to differentiate, at least, between subspaces of different dimensions. Such concept was introduced by K\"{a}enm\"{a}ki and Suomala in \cite{KaenmakiSuomala11}. Roughly speaking, a set is $k$-porous if inside a reference ball one can find holes in $k$ mutually orthogonal directions.

More precisely, given a set $E\subset\R^d$, $x\in\R^d$ and $r>0$, let
\begin{align*}
\por_k(E,x,r) = \sup\{\alpha: & \text{there are } z_1,\ldots, z_k \text{ such that } \\
& B(z_i,\alpha r) \subset B(x,r)\setminus E \text{ for all }i=1,\ldots, k \text{ and }\\
& (z_i-x)\cdot (z_j-x) = 0 \text{ for } i\neq j \}.
\end{align*}

As for usual porosity, we then define
\begin{align*}
\por_k(E,x) &= \liminf_{r\to 0} \por(E,x,r),\\
\por_k(E) &= \inf_{x\in E} \por(E,x).
\end{align*}
We can define $k$-porosity for measures following the usual procedure; the precise definition is left to the reader.

It is easy to see that if $V\subset\R^n$ is a subspace of dimension $\ell$, then $\por_k(V)=1/2$ for $k=1,\ldots,n-\ell$, and $\por_k(V)=0$ for $k=n-\ell+1,\ldots, n$. Hence $k$-porosity does allow us to detect the dimension of subspaces and, in general, provides a finer way to distinguish between fractal subsets of $\R^d$.

In light of Theorem \ref{thm:main}, it is natural to ask what are the corresponding bounds for $k$-porous measures (or sets). The idea of $k$ porosity is somewhat artificial in the small porosity context; in fact, one cannot give any better dimension bounds than \eqref{eq:bound-small-porosity}. In the large porosity case, K\"{a}enm\"{a}ki and Suomala in \cite{KaenmakiSuomala11} proved and applied conical density results to conclude that there is a function $g_{n,k}(\alpha)$ with $g_{n,k}(\alpha)\to 0$ as $\alpha\to 1/2$, such that
\[
\dim_H(E) \le n-k - g_{n,k}(\alpha) \quad\text{for all } E\in \R^d \text{ such that } \por_k(E)\ge \alpha.
\]
A $k$-plane shows that the limit $n-k$ as $\alpha\to 1/2$ is optimal. The sharp function $g_{n,k}$, up to multiplicative constants, was identified in \cite[Corollary 2.6]{JJKS05}: they show that if $\por_k(E)=s$, then
\[
\dim_P(E) \le n - k - \frac{C_n}{|\log(1-2\rho)|}.
\]
(Compare with \eqref{eq:bound-large-porosity}). Finally, in \cite{KRS11}, the same bound was obtained for $k$-porous measures.

To the best of our knowledge, the concept of mean $k$-porosity has not been investigated. It appears that a combination of the ideas of \cite{KRS11} with the method of local entropy averages may yield dimension bounds in that context.

\subsection{Directed porosity}
\label{subsec:directed-porosity}

V. Chousionis \cite{Chousionis09} introduced and studied the idea of {\em directed porosity}. This is similar to porosity, except that the holes are required to lie in a fixed direction from the centre of the reference ball. More precisely, let $A\subset \R^d$, and let $V\in G(d,k)$ for some $1\le k<d$. We define the $V$-directed porosity in the usual steps as follows:
\begin{align*}
\por^V(E,x,r) &= \sup\{\alpha: \exists z\in x+V, B(z,\alpha r)\subset B(x,r)\setminus E \},\\
\por^V(E,x) &= \liminf_{r\to 0} \por^V(E,x,r),\\
\por^V(E) &= \inf_{x\in A} \por^V(E,x).
\end{align*}

We note that directed porosity is a stronger notion than porosity (in other words, $\por^V(A)\le \por(A)$), and it becomes stronger as $k$ becomes smaller. In terms of dimension, directed porosity behaves the same as porosity; one cannot get any better bounds since the known examples illustrating the sharpness of the estimates can be taken to be $V$-directed porous for any $V\in G(d,1)$.

We recall the definition of the well-known {\em open set condition} for an IFS $\{f_1,\ldots, f_m\}$ on $\R^d$: there exists a nonempty open set $U\subset\R^d$ such that $f_i(U)\subset U$ for all $i$, and the sets $f_i(U)$ are mutually disjoint. The open set condition allows some overlaps between the pieces $f_i(E)$ (where $E$ is the attractor), but it guarantees that they are ``small'' so that they can be handled in many cases.

We need to recall one more definition: a set $E\subset\R^d$ is called {\em $k$-purely unrectifiable} if $\mathcal{H}^k(E\cap M)=0$ for any $C^1$ $k$-manifold $M\subset\R^d$.

Chousionis proved the following result on the directed porosity of self-similar sets:

\begin{thm}{\cite[Theorem 1.2 and Corollary 1.3]{Chousionis09}} \label{thm:chousionis}
Let $E\subset\R^d$ be a self-similar set with the open set condition. Then:
\begin{enumerate}
\item For any $k\in \{1,\ldots, d-1\}$ with $\dim_H(E)\le k$, $E$ is $V$-directed porous for all but at most one $V\in G(d,k)$.
\item If $E$ is in addition $k$-purely unrectifiable, then $E$ is $V$-directed porous for all $V\in G(d,k)$.
\end{enumerate}
\end{thm}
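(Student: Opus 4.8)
The plan is to reduce the whole statement to a single \emph{blow-up lemma}: if $\por^V(E)=0$ for some $V\in G(d,k)$, then some \emph{tangent set} of $E$ contains a full $k$-dimensional ball, where by a tangent set I mean a Hausdorff limit of rescaled copies $f_w^{-1}(E)$ restricted to a fixed ball. The crucial structural fact is that, for a self-similar set with the open set condition, every such limit is a \emph{finite} union of congruent-up-to-scaling copies of $E$, with scaling ratios bounded above and below. Before doing anything I would normalise $\diam E=1$ and record the standard facts to be used: pure $k$-unrectifiability is preserved by similarities and by finite unions; and, by the theorem of Hutchinson--Moran, under the open set condition $\dim_H E$ equals the similarity dimension and, if $\dim_H E=k$, then $0<\mathcal{H}^k(E)<\infty$.

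To prove the blow-up lemma, start from $\por^V(E)=\inf_{x}\liminf_{r\to 0}\por^V(E,x,r)=0$, which yields points $x_j\in E$ and radii $r_j\to 0$ with $\por^V(E,x_j,r_j)\to 0$. For each $j$ choose a word $w_j$ with $x_j\in f_{w_j}(E)$ and $\lambda_{\min} r_j<\lambda_{w_j}\le r_j$; using such ``stopping time'' words rather than words of a fixed length is exactly what keeps all ratios comparable. The expanding similarity $g_j:=f_{w_j}^{-1}$ sends $B(x_j,r_j)$ to a ball $B(y_j,\beta_j)$ with $y_j:=g_j(x_j)\in E$ and $\beta_j\in[1,\lambda_{\min}^{-1})$, and transforms ``there is no $V$-directed hole of relative size $\ge\e_j$ at $x_j$ inside $B(x_j,r_j)$'' into ``there is no $W_j$-directed hole of radius $\e_j\beta_j\to 0$ at $y_j$ inside $B(y_j,\beta_j)$'', where $W_j$ is the image of $V$ under an orthogonal map. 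The key subtlety here is that $g_j(E)$ is \emph{not} $E$: it is $E$ together with rescaled copies of the neighbouring cylinders, and it is precisely this extra mass that can fill in the holes of $E$ \textemdash{} this is the mechanism that allows a bad direction to exist at all. By the open set condition the sets $f_v(U)$ are disjoint, so only boundedly many rescaled cylinders can meet a fixed ball; hence $g_j(E)\cap\overline{B(y_j,\lambda_{\min}^{-1})}$ is a union of at most $N_0$ congruent-up-to-scaling copies of $E$ (one of them being $E$ itself). Passing to a subsequence so that $y_j\to y_*$, $W_j\to W_*$, $\beta_j\to\beta_*\ge 1$, and the finite unions converge in the Hausdorff metric to $\widetilde E=\bigcup_{l=1}^N s_l(E)\supseteq E$, the ``no hole'' property survives: $\widetilde E$ has no $W_*$-directed hole at $y_*$ of any radius $\rho>0$ inside $B(y_*,\beta_*)$. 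Letting $\rho\to 0$ forces $(y_*+W_*)\cap B(y_*,\beta_*)\subseteq\widetilde E$, i.e. $\widetilde E$ contains a $k$-dimensional ball.

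Part (2) is then immediate. Each $s_l(E)$ is a similar copy of the $k$-purely unrectifiable set $E$, hence $k$-purely unrectifiable, hence so is the finite union $\widetilde E$. But a $k$-dimensional ball lies in a $C^1$ (indeed affine) $k$-manifold $P$ and has positive $\mathcal{H}^k$-measure, contradicting $\mathcal{H}^k(\widetilde E\cap P)=0$. Therefore $\por^V(E)>0$ for every $V\in G(d,k)$, which is exactly the assertion that $E$ is $V$-directed porous for all $V$.

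For part (1), suppose $\por^V(E)=0$ for some $V$. By the blow-up lemma $\widetilde E$ contains a $k$-ball, so one of the copies $s_l(E)$ has positive $\mathcal{H}^k$-measure inside a $k$-plane; hence $E$ itself is \emph{not} $k$-purely unrectifiable, and in particular $\mathcal{H}^k(E)>0$, so $\dim_H E=k$ and $0<\mathcal{H}^k(E)<\infty$. At this point I would invoke the structural dichotomy for self-similar sets at their own dimension: such an $E$ is either $k$-purely unrectifiable or contained in a $k$-dimensional affine subspace $c+V_0$; this rigidity, which comes from the structure theorem for sets of finite $\mathcal{H}^k$-measure together with self-similarity, is the main obstacle of the whole argument and is where I would lean on the literature. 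Since $E$ is not purely unrectifiable, $E\subseteq c+V_0$, and $V_0$ is the direction of the affine hull of $E$, hence uniquely determined. Finally, for any $V'\ne V_0$ pick a unit vector $u\in V'\setminus V_0$ and set $\delta_0:=\dist(u,V_0)>0$; then for every $x\in E$ and every $r>0$ the balls $B(x+tu,\,t\delta_0)$ with $0<t<r/2$ lie in $B(x,r)\setminus E$ (they are disjoint from $c+V_0\supseteq E$), so $\por^{V'}(E,x,r)\ge\delta_0/2$ and hence $\por^{V'}(E)\ge\delta_0/2>0$. Thus the only direction in which $E$ can fail to be directed porous is $V_0$, which proves (1).
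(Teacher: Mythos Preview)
The survey does not prove this theorem at all: it is quoted from \cite{Chousionis09} (Theorem 1.2 and Corollary 1.3 there) and only the statement is given, followed by the remark that it extends to finite conformal IFS's. So there is no ``paper's own proof'' to compare your attempt against.

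That said, your plan is a reasonable outline and is in the spirit of how such results are typically proved. The blow-up lemma is the right engine: rescaling by inverse cylinder maps at a stopping time, using the OSC to bound the number of neighbouring cylinders that can meet a fixed window, and passing to a Hausdorff limit to produce a $k$-disk inside a finite union of similar copies of $E$. Part~(2) then follows cleanly.

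Two comments on part~(1). First, the step ``$E$ with $\dim_H E=k$ and OSC is either $k$-purely unrectifiable or contained in a single affine $k$-plane'' is doing essentially all of the work, and you are right to flag it as the main input from the literature; it is a genuine rigidity theorem (tangent-measure arguments \`a la Mattila/Preiss combined with self-similarity), not a triviality, and a reader would want a precise reference at that point. Second, your blow-up produces a disk in the \emph{rotated} direction $W_*$, not in $V$ itself; this is harmless for~(2) and for your route to~(1) via the dichotomy (since you only use that $E$ fails pure unrectifiability, not in which direction), but it means you cannot hope to identify the exceptional $V$ directly from the blow-up without the rigidity step. Your final paragraph showing $\por^{V'}(E)>0$ for $V'\neq V_0$ is correct in spirit; the constant should be $\delta_0/(1+\delta_0)$ rather than $\delta_0/2$, but this is cosmetic.
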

In fact, the theorem holds not only for self-similar sets, but for attractors of finite conformal iterated function systems under some natural conditions, see \cite[Section 2]{Chousionis09}.

The motivation for the study of directed porosity in \cite{Chousionis09} was the problem of convergence of truncated singular integrals with respect to general measures. In \cite[Theorem 1.4]{Chousionis09} it is proved that for a very wide class of antisymmetric kernels $K$ in $\R^d$ (including Riesz kernels), the truncated singular integrals
\[
T_\e(f)(x) = \int_{|x-y|>\e} K(x-y) f(y)d\mu(y)
\]
converge, as $\e\to 0$, for $f$ in the dense subspace of $L^2$ generated by the characteristic functions of balls, provided the support of $\mu$ is $V$-directed porous for all $V\in G(d,d-1)$, and $\mu$ satisfies the growth condition $\mu(B(x,r)) \le C r^{d-1}$ for all $x\in\supp\mu$, $r>0$. We note that one cannot have convergence in all of $L^2$ for general kernels (there are counterexamples for the $1$-dimensional Riesz kernel in $\R^2$).

\medskip

\textbf{Ackowledgments}. I am grateful to Antti K\"{a}enm\"{a}ki and Ville Suomala for many enlightening discussions about porosity, a careful reading of the manuscript, and pointing out numerous relevant references.

%\bibliographystyle{plain}
%\bibliography{uma}

\end{document}